\newtheorem{theorem}{Theorem}
\newenvironment{proof}[1][Proof]{\noindent\textbf{#1.} }{\ \rule{0.5em}{0.5em}}
\begin{document}

\title {The Geometry of the Mappings by General Dirichlet Series}

\bigskip

\author {Dorin Ghisa}

{York University, Glendon College, Toronto, Canada}

{dghisa@yorku.ca}

\maketitle

\textit{We dealt in a series of previous publications with some geometric aspects of the mappings by functions obtained as analytic continuations to the whole complex plane of general Dirichlet series. Pictures illustrating those aspects contain a lot of other information which has been waiting for a rigorous proof. Such a task is partially fulfilled in this paper, where we succeeded among other things, to prove a theorem about general Dirichlet series having as corollary the Speiser's theorem. We have also proved that those functions do not possess multiple zeros of order higher than $2$ and the double zeros have very particular locations. Moreover, their derivatives have only simple zeros. With these results at hand, we revisited GRH for a simplified proof.}

\bigskip

\textit{Keywords: general Dirichlet series, $S_{k}$-strips, intertwining curves, fundamental domains, Riemann Hypothesis}

\bigskip

2010 AMS Subject Classification: 30C35, 11M26

\bigskip 

\bigskip

\textbf{\textbf{1. Introduction} } \bigskip

The study of general Dirichlet series has its origins in the works of
(E. Cahen, 1894,1918), (J. Hadamard, 1896, 1908), with important contributions of (E. Landau, 1907,1909,1917,1921), (H. Bohr, 1913, a,b,c,d), (G. H. Hardi, 1911), (G. H. Hardi and M. Riesz, 1915), (T. Kojima, 1914, 1916), (M. Kuniyeda, 1916), (G. Valiron, 1926), etc.
A lot of contemporary mathematicians created a diversified theory of general Dirichlet series, some insisting on the connection with the Laplace-Stieltjes transforms (J. Yu, 1963, Y. Kong and S. Daochun, 2008, X. Luo and Y. Y. Kong, 2012, Y. Gingjing, 2012, Y. Y. Kong and Y. Yan, 2014, C. Singhal and G. S. Srivastava, 2015a and 2015b), others endowing them with some topological structures (P.K. Kamthan and S. K. Shing Gautam, 1975,  L.H. Khoi, 2015), extending to them the Nevanlinna theory (too many to be cited) or dealing with vector valued Dirichlet series (A. Defant, D. Garcia. M. Maestre, D. Perez-Garcia, 2008, J. Bonet, 2015, B.L. Srivastava, 1983, A. Sharma and G. S. Srivastava, 2011 and 2016)  

 We will use normalized series defined as follows.
To any sequence $A=(1=a_{1},a_{2},...)$ of complex numbers and any
increasing sequence $\Lambda =(0=\lambda _{1}<\lambda _{2}<...)$ such that
 $\lim_{n->\infty }\lambda _{n}=\infty $ we associate the series\bigskip

\qquad $(1)\qquad \zeta _{A,\Lambda }(s)=\Sigma _{n=1}^{\infty
}a_{n}e^{-\lambda _{n}s}$\bigskip

\bigskip

$\Lambda $ is called the \textit{type} of the series (1) and the series
defined by the same $\Lambda $ will be called series of the \textit{same type%
}. When $\lambda_{n}=\ln{n}$, we obtain the ordinary (\textit{\textit{proprement dites}}, Valiron, 1926) Dirichlet series.

Suppose that $A$ and $\Lambda $ are such that the \textit{abscissa of
convergence} (see Hardi-Riesz, 1915 and Valiron, 1926) of the series (1)

\bigskip

$\qquad (2)\qquad \sigma _{c}=\lim \sup_{n->\infty }\frac{1}{\lambda _{n}}%
\log| \Sigma _{k=1}^{n}a_{k}|$ \bigskip $\ $\ is finite.

\bigskip

Then (see Cahen, 1894) $z=\zeta_{A,\Lambda }(s)$ is an analytic function in the
half plane ${Re s}>\sigma _{u}$, where $\sigma_{u}$ is the \textit{abscissa of uniform convergence} of (1), and where $\sigma_{u}$ is at most $\sigma_{c}+D$
with

\bigskip

$\qquad (3)\qquad \ D=\lim \sup_{n->\infty }\frac{log {n}}{\lambda _{n}}$ 

\bigskip

For the ordinary Dirichlet series $D=1$ and it is known that in the case of Dirichlet L-series defined by imprimitive Dirichlet characters  $\sigma_{c}=0$ and $\sigma_{u}=1$, while in the case of primitive characters $\sigma_{c}=\sigma_{u}=0$.

In general, when $D=0$ then $\sigma_{u} = \sigma_{c}$,  and the series (1) is an analytic function in the half plane ${Re s}>\sigma _{c}$.

Suppose that this function can be continued analytically to the whole
complex plane, except possible at $s=1$ which is a simple pole. We keep the
notation $\zeta _{A,\Lambda }(s)$ for the function obtained by analytic
continuation.

\bigskip

With the exception of a discrete set of points from the complex plane, this
function is locally injective, i.e. it maps conformally and hence
bijectively small neighbourhoods of every point onto some domains. Enlarging
these neighbourhoods, the image domains get bigger. How big can they get? The
answer is: they can become the whole complex plane with some slits (see
Ghisa, 2013, 2014, 2016a, 2016b). A region with this property is called \textit{%
fundamental domain} of $\zeta _{A,\Lambda }(s)$.

\bigskip

Our aim is to show that the complex plane can be divided into a countable
number of sets whose interiors are fundamental domains of this function. We
have done this previously for the Riemann Zeta function (Andreian-Cazacu, Ghisa, 2009) as well
as for Dirichlet L-functions (Ghisa, 2016a). To do the same thing for functions
obtained by analytic continuations of general Dirichlet series we made (Ghisa, 2016b) the assumption that (1) satisfies a Riemann type of functional equation. We will show next that such a strong assumption is not necessary and we can obtain the same result by using just elementary properties of the conformal mapping.

\bigskip  

It has been proved (Ghisa, 2016b) that for normalized series (1) the limit
 $\lim_{\sigma ->+\infty }\zeta _{A,\Lambda}(\sigma +it)=1$ is uniform with respect to $t.$ This means that for any $\epsilon >0$ there is $\sigma _{\epsilon }\geq \sigma _{c}$ such that $\sigma >\sigma _{\epsilon }$ implies $|\zeta _{A,\Lambda }(s)-1|$ $<\epsilon
,$ therefore the whole half plane ${Re}s>\sigma _{\epsilon }$ is mapped
by $\zeta _{A,\Lambda }(s)$ into a disc centred at $z=1$ of radius $
\epsilon .$ 

\bigskip

An immediate consequence of this fact is that the abscissa of convergence of a normalized series is less than $+\infty$. Moreover, for $\epsilon <1$ there is no zero of $\zeta _{A,\Lambda }(s)$ in the half plane 
${Re}s>\sigma _{\epsilon }$. Also, if $\zeta _{A,\Lambda }(s)$ satisfies a
Riemann type of functional equation, then there cannot be non trivial zeros
of $\zeta _{A,\Lambda }(s)$ in the half plane ${Re} s< -\sigma
_{\epsilon }$ neither.

\bigskip

\bigskip

\textbf{\ \textbf{2. Pre-Images of Lines and Circles}}

\bigskip

Suppose that, for a point $s_0 $ with $Re s_0 >\sigma _{c} $ the function $%
\zeta _{A,\Lambda }(s)$ has a real value $\zeta _{A,\Lambda }(s_0) > 1$. The
continuation from $s_0$ along the interval $[1,+\infty ),$ is a curve $%
\Gamma_{k}^{\prime}$ such that when  $z=\zeta _{A,\Lambda }(s)$ tends to $1$
, we have that $\sigma$ tends to $+\infty $ on $\Gamma _{k}^{\prime} $, or
there is a point $u_{k,j}$ such that $\zeta _{A,\Lambda }(u_{k,j})= 1$ and
the continuation can be carried along the whole real axis giving rise to a 
curve $\Gamma_{k,j}$. We will show later that when $z=\zeta _{A,\Lambda }(s)$ 
tends to $+\infty $ then $\sigma$ tends to $-\infty $ on $\Gamma _{k}^{\prime }$, or
on $\Gamma _{k,j}$, in other words these curves cannot remain in a right
half plane.

\bigskip

Let us notice first that $\zeta _{A,\Lambda} (s)$ are transcendental
functions and $s=\infty$ is an essential singular point for them. The value $%
z=0$ cannot be a lacunary value for $\zeta _{A,\Lambda }(s)$ since then $%
\frac{1}{\zeta _{A,\Lambda }(s)}$ would be a non polynomial integer function
for which $s=\infty$ is an essential singular point and hence $s=0$ would be
also an essential singular point for $\zeta _{A,\Lambda }(s)$, which is not
true. Then, by the Big Picard Theorem, $\zeta _{A,\Lambda} (s)$ has
infinitely many zeros in every neighbourhood of $s=\infty$.

\bigskip

Given a bounded region of the plane, there is $r>0$ such that the pre-image
of the circle $C_{r}$ centred at the origin and of radius $r$ has only
disjoint components, which are closed curves containing each one a unique
zero of $\zeta _{A,\Lambda} (s)$ belonging to that region. When $r$
increases those curves expand and they can touch one another at some points $v_{k}$. 
These are branch points of the function, since in every neighbourhood of $v_{k}$
the function takes at least twice any value on the image circle. Therefore
the derivative of $\zeta _{A,\Lambda} (s)$ cancels at $v_{k}$. It is obvious that
any zero of the derivative, which is not a zero of the function itself, can
be obtained in this way. Indeed, if $v$ is such a zero, we can take 
$r=|\zeta _{A,\Lambda} (v)|$ and necessarily several components of the pre-image 
of $C_{r}$ will pass through $v$.

\bigskip

What happens with those components of the pre-image of a circle $C_{r}$ when $r=1$ ?
We have proved (Ghisa, 2016b, Theorem 1) that there is at least one unbounded component
of the pre-image of the unit circle. 
That proof did not use the assumption of $\zeta_{A,\Lambda} (s)$ satisfying a Riemann type of functional equation and therefore it is true for any function $\zeta _{A,\Lambda} (s)$.

\bigskip

Let us notice that two curves $\Gamma _{k}^{\prime }$ and $\Gamma
_{l}^{\prime }$ cannot intersect each other. Indeed, if $s_{0}$ would be a
common point of these curves, then when $z=\zeta _{A,\Lambda} (s)$ moves on
the interval $I$ between $z_{0}=\zeta _{A,\Lambda} (s_{0})$ and $1$ the
point $s$ describes an unbounded curve which bounds a domain mapped by $%
\zeta _{A,\Lambda} (s)$ onto the complex plane with a slit alongside the
interval $I$. That domain should contain a pole of $\zeta _{A,\Lambda} (s)$
which is not true. Therefore an intersection point $s_{0}$ of the two curves
cannot exist and consecutive curves $\Gamma _{k}^{\prime }$ and $\Gamma
_{k+1}^{\prime }$ bound infinite strips $S_{k}$. We suppose that $S_{0}$ is
the strip containing the point $s=1$ and for every integer $k$, the curve $%
\Gamma _{k+1}^{\prime }$ is situated above $\Gamma _{k}^{\prime }$.

\bigskip

\begin{figure}
  \includegraphics[scale=1]{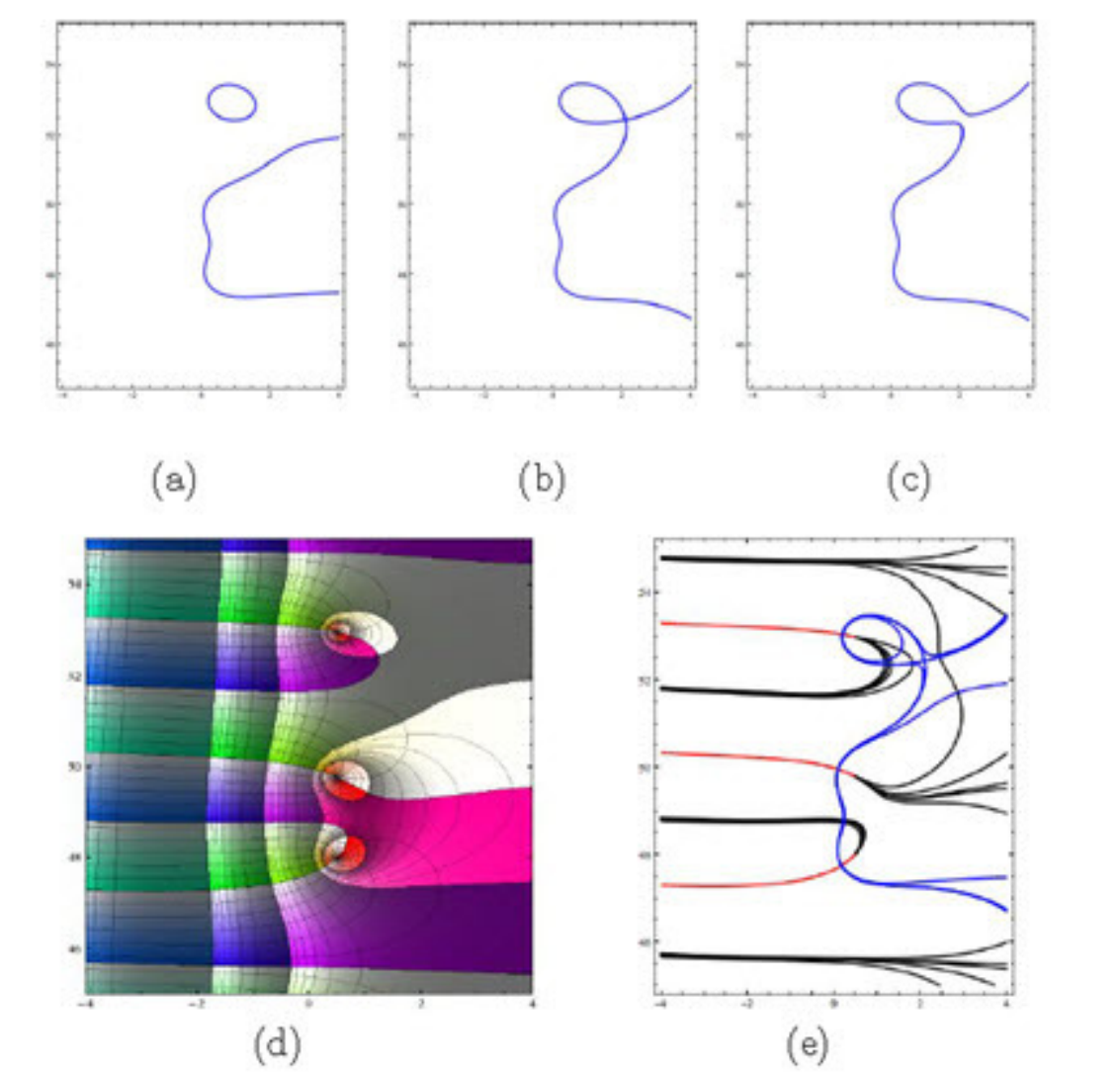}
  \caption{The birth of a strip}
  \label{fig: strips and the unit circle}
\end{figure}

We have also proved (Ghisa, 2016b, Theorem 2) that every unbounded component of the 
pre-image by $\zeta _{A,\Lambda} (s)$ of the unit circle is contained between
two consecutive curves $\Gamma _{k}^{\prime }$ and $\Gamma _{k+1}^{\prime }$
and vice-versa, between two consecutive curves $\Gamma _{k}^{\prime }$ and $%
\Gamma _{k+1}^{\prime }$ there is a unique unbounded component of the
pre-image of the unit circle. It has been shown that the respective component
does not intersect any one of these curves. For $k$ different of $0$ every
strip $S_{k}$ contains also a unique curve $\Gamma_{k,0}$ which is mapped
bijectively by $\zeta _{A,\Lambda} (s)$ onto the interval $(-\infty, 1)$ of
the real axis. There are infinitely many strips $S_{k}$ covering the whole
complex plane (Ghisa, 2016b, Theorem 4). Therefore, there are infinitely many unbounded 
components of the pre-image by $\zeta _{A,\Lambda} (s)$ of the unit circle.
Some strips $S_{k}$ can contain also bounded components of the pre-image of the unit
circle as well as bounded components of the pre-image of $C_{r}$ with $r>1$.

\bigskip

\begin{figure}
  \includegraphics[width=\linewidth]{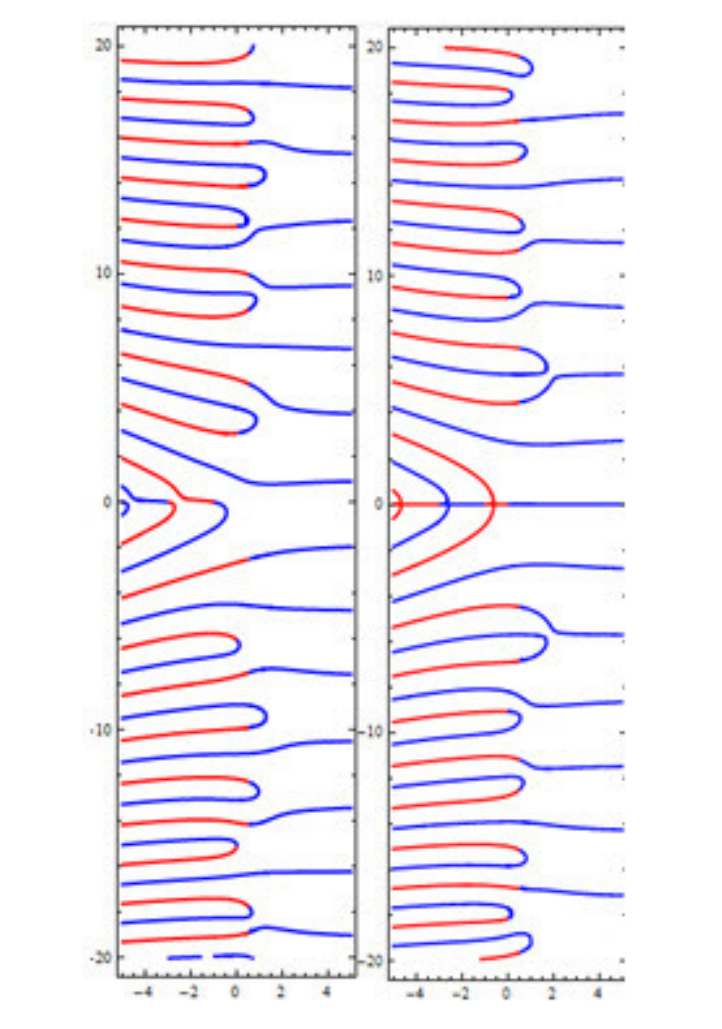}
  \caption{The pre-image of the real axis by  two Ditichlet L- functions}
  \label{fig: imaginary and real Dirichlet characters}
\end{figure}

The use of the pre-image of the real axis can be traced to Speiser's work
(Speiser 1934) on the Riemann Zeta function. After that the pre-image of the
real axis does not appear any more in literature as a tool except for the
paper of Arias-de-Reina (Areas-de-Reina, 2003), who revisits Speiser's theorem. 
In the same year John Derbyshire uses both: the pre-image of the real axis and
that of the imaginary axis in his popular book \textit{Prime Obsession} and
declares that they are \textit{at the heart} of that book. The
classification of the components of the pre-image of the real axis by the Riemann Zeta function appears for the first time in (Andreian-Cazacu, Ghisa, 2009), where the strips $S_k$ are also introduced and a method is devised of partitioning them into fundamental domains. Later, such a classification has been extended to Dirichlet L-functions and finally to functions defined by general Dirichlet series.

\bigskip

A different approach, namely that of phase diagram, has been used by Elias Wegert  for visual exploration of complex functions (Wegert, 2012, 2016). Applied to the Riemann Zeta function, his phase plots revealed interesting patterns pertaining to the universality property of that function. It is known that such a property extends to more general Dirichlet series and probably it can complement our fundamental domains approach.  

\bigskip

\begin{theorem}
No zero of $\zeta _{A,\Lambda} (s)$ or of $\zeta _{A,\Lambda}^{\prime} (s)$
can belong to a curve $\Gamma _{k}^{\prime }$.
\end{theorem}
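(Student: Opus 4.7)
The plan is to treat the two kinds of zeros separately. For zeros of $\zeta_{A,\Lambda}$ the conclusion is immediate: by construction $\Gamma_k'$ lies in the pre-image of $[1,+\infty)$, so $\zeta_{A,\Lambda}(s)\ge 1$ at every $s\in\Gamma_k'$, which forbids $\zeta_{A,\Lambda}$ from vanishing there. All of the real work is in ruling out zeros of $\zeta'_{A,\Lambda}$.

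Suppose, for contradiction, that $v\in\Gamma_k'$ satisfies $\zeta'_{A,\Lambda}(v)=0$; by the previous step $z_v:=\zeta_{A,\Lambda}(v)>1$. The local Taylor expansion $\zeta_{A,\Lambda}(s)-z_v = c(s-v)^m + O((s-v)^{m+1})$ with $m\ge 2$ and $c\ne 0$ shows that $\zeta_{A,\Lambda}^{-1}(\mathbb{R})$ in a small neighbourhood of $v$ consists of exactly $2m$ analytic arcs meeting at $v$ at equal angles $\pi/m$; since $z_v>1$ all of these arcs actually lie in $\zeta_{A,\Lambda}^{-1}([1,+\infty))$. On the other hand, $\Gamma_k'$ is a continuous lift $t\mapsto s(t)$ of $[1,+\infty)$ satisfying $\zeta_{A,\Lambda}(s(t))=t$, which is automatically injective; it therefore passes through $v$ exactly once and occupies only one \emph{incoming} and one \emph{outgoing} arc at $v$, leaving at least $2m-2\ge 2$ additional arcs of $\zeta_{A,\Lambda}^{-1}([1,+\infty))$ through $v$ that are not accounted for by $\Gamma_k'$.

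The final step follows each of these extra arcs globally. Along an arc on which $\zeta_{A,\Lambda}$ increases from $z_v$, the only possible termination is at $\zeta_{A,\Lambda}=+\infty$, i.e.\ at $s=\infty$ or (only inside $S_0$) at the simple pole $s=1$; along an arc on which $\zeta_{A,\Lambda}$ decreases, termination can only occur at $\zeta_{A,\Lambda}=1$, which means either $\sigma\to+\infty$ or at a finite point of $\zeta_{A,\Lambda}^{-1}(1)$. An extra arc that escapes to infinity in the $s$-plane belongs to an unbounded component of $\zeta_{A,\Lambda}^{-1}([1,+\infty))$ and hence to some $\Gamma_l'$; since $\Gamma_k'$ already uses only two of the local arcs at $v$, this $\Gamma_l'$ cannot coincide with $\Gamma_k'$, so we would have $l\ne k$, contradicting the earlier disjointness $\Gamma_k'\cap\Gamma_l'=\emptyset$. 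The main obstacle is the possibility that all extra arcs terminate at finite sinks --- at the pole $s=1$ (only in $S_0$) or at finite points of $\zeta_{A,\Lambda}^{-1}(1)$; I would close this off by applying the argument principle to the bounded region $D$ cut out by two adjacent extra arcs together with a piece of $\Gamma_k'$ between them. Since $\partial D\subset\zeta_{A,\Lambda}^{-1}(\mathbb{R})$, the curve $\zeta_{A,\Lambda}(\partial D)$ has winding number zero around every non-real $w$, and hence $\zeta_{A,\Lambda}$ is forced to be constant on $D$ unless $D$ contains a pole. The only pole is the simple one at $s=1$, and the local form $\zeta_{A,\Lambda}(s)\sim 1/(s-1)$ shows that only a single arc of $\zeta_{A,\Lambda}^{-1}([1,+\infty))$ can end at $s=1$; a careful book-keeping of the up- and down-arcs at $v$ then exhausts all cases and produces the final contradiction.
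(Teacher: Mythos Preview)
Your treatment of zeros of $\zeta_{A,\Lambda}$ itself is fine and matches the paper. For zeros of the derivative, however, you take a genuinely different route from the paper, and your argument has a real gap.

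The paper never analyses the local branching of $\zeta_{A,\Lambda}^{-1}(\mathbb{R})$ at a putative critical point $v\in\Gamma_k'$. Instead it uses the description of zeros of $\zeta_{A,\Lambda}'$ as contact points of components of $\zeta_{A,\Lambda}^{-1}(C_r)$ for circles $C_r$ centred at the origin. A bounded component of $\zeta_{A,\Lambda}^{-1}(C_r)$ is a closed Jordan curve; if it met $\Gamma_k'$ it would have to do so at least twice (or tangentially), forcing $C_r$ to meet the interval $[1,+\infty)$ at least twice, which is impossible. The unbounded component $\gamma_r$ does cross every $\Gamma_k'$, but at a single point per curve (since $\Gamma_k'\to(1,+\infty)$ is a bijection), so no fusion of components --- hence no zero of $\zeta_{A,\Lambda}'$ --- occurs there. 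This circle-based bookkeeping is what your approach replaces.

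Your branching argument breaks at the sentence ``An extra arc that escapes to infinity in the $s$-plane belongs to an unbounded component of $\zeta_{A,\Lambda}^{-1}([1,+\infty))$ and hence to some $\Gamma_l'$.'' The implication ``hence to some $\Gamma_l'$'' is false in this setting: the curves $\Gamma_{k,j}$ with $j\neq 0$ are mapped bijectively onto \emph{all} of $\mathbb{R}$, so each contains an unbounded arc lying in $\zeta_{A,\Lambda}^{-1}([1,+\infty))$ and running off to $\sigma\to -\infty$. An ``up'' extra arc at $v$ on which $\zeta_{A,\Lambda}\to+\infty$ could perfectly well be one of these, not a $\Gamma_l'$. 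The disjointness result proved before Theorem~1 covers only pairs $\Gamma_k',\Gamma_l'$; it says nothing about $\Gamma_k'\cap\Gamma_{k,j}$, and indeed that disjointness is essentially what Theorem~1 asserts, so invoking it here is circular. Your argument-principle sketch at the end is aimed only at arcs terminating at finite sinks and does not cover this $\sigma\to -\infty$ escape, so the case analysis is not exhaustive. To salvage your approach you would need an independent reason why no arc of $\zeta_{A,\Lambda}^{-1}([1,+\infty))$ through $v\in\Gamma_k'$ can run to $\sigma\to -\infty$; the paper's circle argument supplies exactly that, which is why it works.
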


\bigskip

\begin{proof}
The affirmation of the theorem is obvious for the zeros of $\zeta
_{A,\Lambda} (s)$ since $0$ does not belong to the interval $[1,+\infty)$ of
the real axis. A more intricate argument guarantees that the same is true
for the zeros of the derivative of $\zeta _{A,\Lambda} (s)$. Indeed, even if 
$r>1$ , no bounded component of the pre-image of $C_{r}$ can reach $\Gamma
_{k}^{\prime }$, despite of the fact that $C_{r}$ intersects the interval $%
[1, +\infty)$. Indeed, in the contrary case, the respective component should
intersect $\Gamma _{k}^{\prime }$ at lest twice, or it should be tangent to
it, fact which requires that $C_{r}$ intersects the interval $[1,+\infty)$
the same number of times or to be tangent to it, which is not possible. The
fact that $C_{r}$ intersects the interval $[1,+\infty)$ has as effect the
pre-image of $C_{r}$ intersecting the curves $\Gamma _{k,j}$ (and not $\Gamma
_{k}^{\prime }$ ). It results that no two bounded components of the pre-image
of $C_{r}$ can meet on $\Gamma _{k}^{\prime }$, neither can one of these
components meet on $\Gamma _{k}^{\prime }$ an unbounded component of the
pre-image of $C_{r}$ into a zero of $\zeta _{A,\Lambda}^{\prime} (s)$.

\bigskip

\textbf{Remark}: Theorem 1 does not imply that $\zeta _{A,\Lambda}^{\prime} (s)$ cannot have zeros on some $\Gamma _{k,j}$. Such zeros appeared as possible for the Dirichlet L-function $L(5,2,s)$ as seen in Fig 3 below when $t$ has approximately the values $169.2$ and $179.2$. However, we suspected that this was due to the poor resolution of the picture and indeed, when we zoomed on the respective points, we obtained configurations which show clearly that $\zeta _{A,\Lambda}^{\prime} (s)$ does not cancel there. We will present a rigorous proof of the impossibility of such zeros a little later.

\bigskip
\noindent
 \begin{figure}
   \includegraphics[scale=0.7]{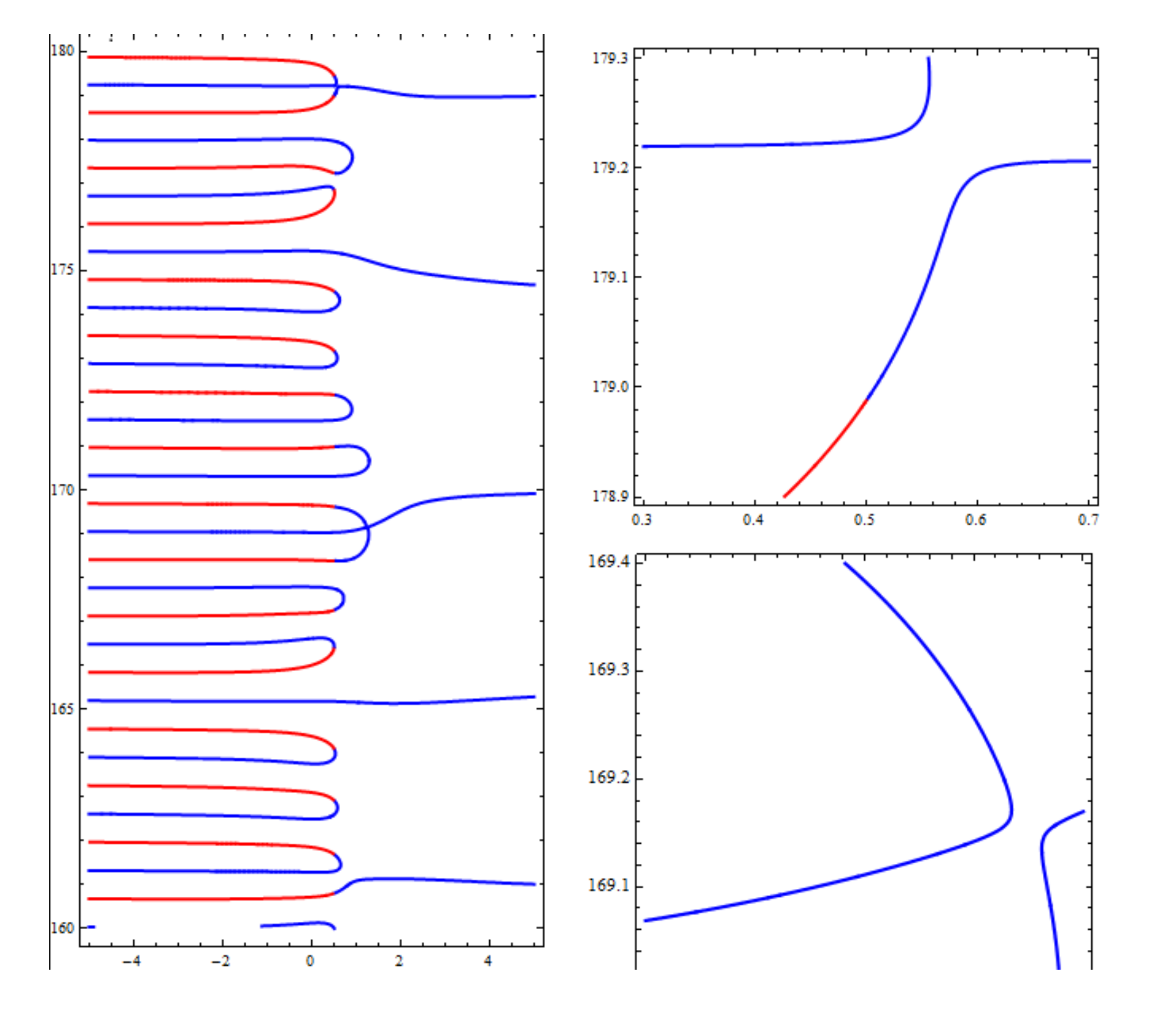}
   \caption{The only zeros of  $\zeta _{A,\Lambda}^{\prime}(s)$ on $\Gamma_{k,j}$      are the double zeros of   $\zeta_{A,\Lambda} (s)$.}
   \label{fig:unexpected intersection}
 \end{figure}

\bigskip

We have seen that the unbounded components of the pre-image of the unit
circle do not intersect any $\Gamma _{k}^{\prime }$. On the other hand the
bounded components of the pre-image of the unit circle intersect curves $%
\Gamma_{k,j}$ at points $u_{k,j}$ where $\zeta _{A,\Lambda} (u_{k,j})$ is $1$
and $-1$. In the same way the bounded components of the pre-image of
$C_{r}=x=_{-}^{+}$1.
with $r>1$ will intersect $\Gamma_{k,j}$ at points $u$ where 
$\zeta _{A,\Lambda} (u)=^+_-r$. However the story of the unbounded components 
of $C_{r}$ is a little more complicated.

\bigskip

When increasing $r$ past $1$ all the unbounded components of $C_{1}$ fuse
together into a unique unbounded curve $\gamma_{r}$ intersecting every curve 
$\Gamma _{k}^{\prime }$, hence they do not generate by this fusion zeros of $%
\zeta _{A,\Lambda}^{\prime}(s)$ . Indeed, since the mapping of $\Gamma
_{k}^{\prime }$ onto the interval $(1,+\infty)$ is bijective, there should
be points $s$ on every $\Gamma _{k}^{\prime }$ such that $\zeta _{A,\Lambda}
(s)=r$. The continuation over $C_{r}$ from each one of these points can be
made clockwise and counter clockwise into $S_{k}$, respectively $S_{k-1}$,
for every $k$, giving rise to that unbounded curve. The final conclusion is
that $\zeta _{A,\Lambda}^{\prime} (s)$ does not cancel on any $\Gamma
_{k}^{\prime }$.\end{proof}

\bigskip

Given any bounded domain in the plane $(s)$, we can take $r>1$ close enough
to $1$ such that $\gamma_{r}$ does not touch any bounded component of the
pre-image of $C_{r}$ included in that domain. However, for bigger values of $r
$ the curve $\gamma_{r}$ comes into contact with every bounded component,
which was turning around one or several zeros $s_{k,j}$ of $\zeta
_{A,\Lambda} (s)$, fusing with it and getting to the left of those zeros,
hence intersecting also the curves $\Gamma_{k,j}$ which contain the
respective zeros. The curve $\gamma_{r}$ is orthogonal to every component of
the pre-image of the real axis if it intersects that component at a point
where $\zeta _{A,\Lambda}^{\prime} (s)$ does not cancel.

\bigskip

We have seen that $\zeta _{A,\Lambda}^{\prime}(s)$ never cancels on 
$ \Gamma_{k}^{\prime }$, yet the question remains if it can cancel on some $\Gamma_{k,j}$. This seemed to be possible as seen by Fig.3. However, as we will see next, bounded components of the pre-image of $C_{r}$ cannot fuse on a curve $\Gamma_{k,j}$ at a zero of $\zeta_{A,\Lambda}^{\prime}(s)$ and the unbounded curve $\gamma_{r}$ cannot fuse on $\Gamma_{k,j}$ with a bounded component of the pre-image of $C_{r}$. 
 
 \bigskip

Since a point turning around the origin in the same direction on an
arbitrary circle $C_{r}$ centred at the origin will meet consecutively the
positive and the negative real half axis, the components of the pre-image of 
$C_{r}$ (including $\gamma_{r}$ when $r>1$) should meet consecutively the pre-image of
the positive and the negative half axis (coloured differently). This is (Ghisa 2014, 2016a) the so-called \textit{color alternating rule}. A zero $s_{0}$ of $\zeta _{A,\Lambda}^{\prime}(s)$ being  on  $\Gamma_{k,j}$ and not being a zero of $\zeta_{A,\Lambda}(s)$ (where the colors are changing) contradicts the color alternating rule since in a neighbourhood of $s_{0}$ the pre-image of $C_{r}$ would meet consecutively the same color. Therefore $\zeta _{A,\Lambda}^{\prime}(s)$ cannot have any zero $s_{0}$ on some $\Gamma_{k,j}$, unless $s_{0}$ is a double zero of  $\zeta_{A,\Lambda}(s)$.
 This remark will help us to state an important generalization of Speiser's theorem (Speiser, 1934) which established an equivalent form of the Riemann Hypothesis.

\bigskip

\begin{figure}
  \includegraphics[width=\linewidth]{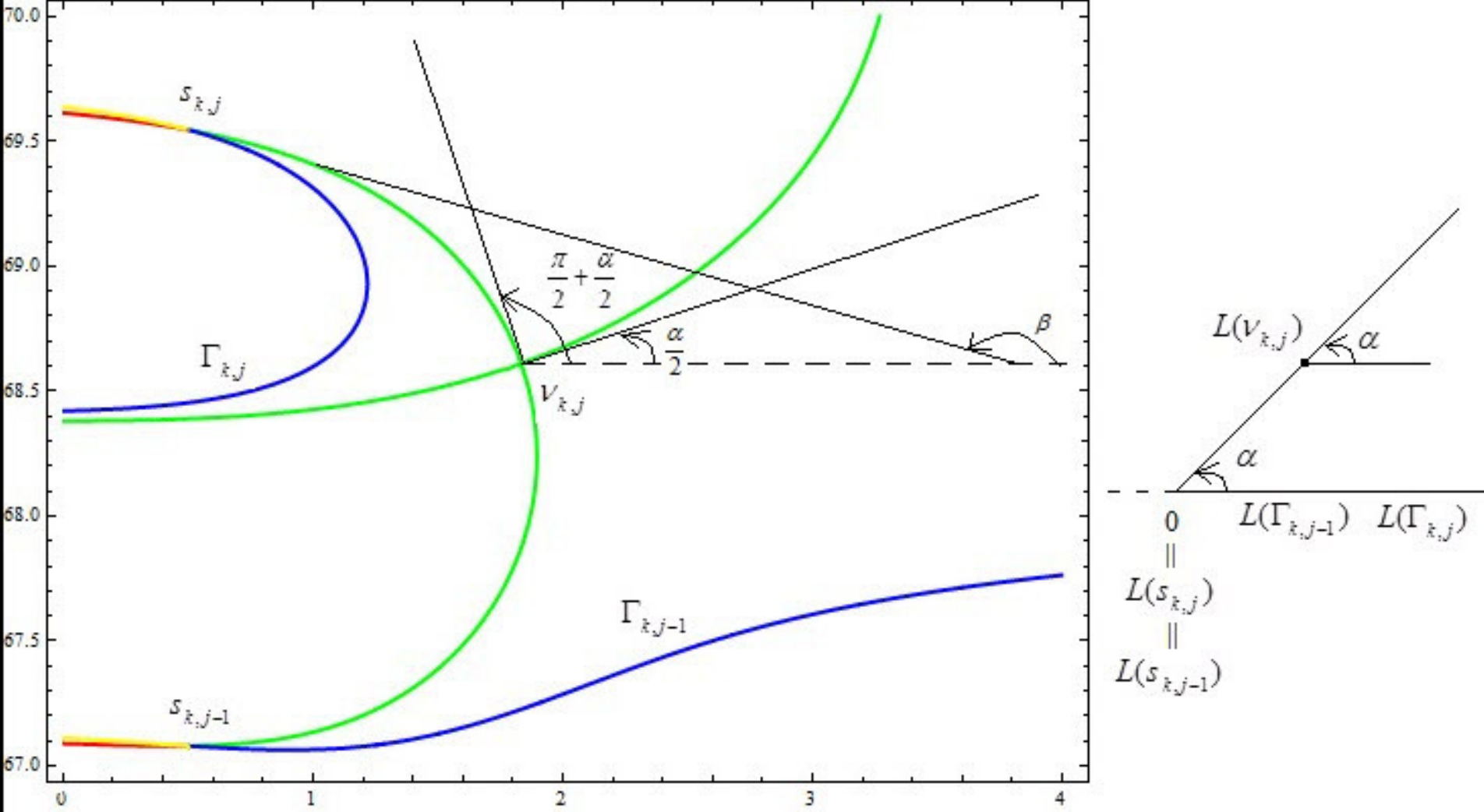}
  \caption{The location of the zeros of the derivative }
  \label{fig: zeros of the derivative}
\end{figure}

\begin{theorem}
For $k$ different of $0$, there is no zero of the derivative of $\zeta
_{A,\Lambda}^{\prime} (s)$ at the left of the leftmost zero of $\zeta
_{A,\Lambda}(s)$ in $S_{k}$
\end{theorem}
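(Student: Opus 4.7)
Plan: Argue by contradiction. Suppose $v \in S_k$ with $k \ne 0$ satisfies $\zeta'_{A,\Lambda}(v) = 0$ and $\mathrm{Re}(v) < \mathrm{Re}(s_{k,*})$, where $s_{k,*}$ denotes the leftmost zero of $\zeta_{A,\Lambda}$ in $S_k$. The case $\zeta_{A,\Lambda}(v)=0$ is ruled out immediately, since it would make $v$ itself a zero of $\zeta_{A,\Lambda}$ in $S_k$ strictly to the left of $s_{k,*}$. So set $r_0 := |\zeta_{A,\Lambda}(v)|>0$. By Theorem~1 the point $v$ lies in the interior of $S_k$, and by the color alternating rule argument just presented, $v$ cannot lie on any $\Gamma_{k,j}$ (since $v$ is not a double zero of $\zeta_{A,\Lambda}$, in fact not a zero at all). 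Hence $v$ is a genuine interior branch point of $\zeta_{A,\Lambda}^{-1}(C_{r_0})$: locally $\zeta_{A,\Lambda}(s) - \zeta_{A,\Lambda}(v)$ vanishes to order $m+1\ge 2$ at $v$, so $m+1$ smooth arcs of the pre-image cross at $v$ with tangents equi-distributed at angles $\pi/(m+1)$, where $m$ is the order of the zero of $\zeta'_{A,\Lambda}$ at $v$.

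Next I would exploit the local sectorial picture. The $2(m+1)$ half-arcs at $v$ partition a small disc about $v$ into alternating \emph{interior} sectors ($|\zeta_{A,\Lambda}|<r_0$) and \emph{exterior} sectors ($|\zeta_{A,\Lambda}|>r_0$). At least one of the $m+1$ interior sectors has its angular bisector pointing westward, i.e.\ into $\{\mathrm{Re}(s-v)<0\}$. Let $K$ denote the maximal connected component of $\{|\zeta_{A,\Lambda}|<r_0\}\cap S_k$ containing this westward sector. Since on $\partial S_k = \Gamma_k' \cup \Gamma_{k+1}'$ one has $\zeta_{A,\Lambda}\in[1,+\infty)$, the set $K$ is separated from $\partial S_k$ when $r_0\le 1$, and is cut off from it by finitely many transversal arcs when $r_0>1$. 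Following $K$ westward, I would show that $K$ is effectively bounded, and hence by the minimum-modulus principle must contain a zero of $\zeta_{A,\Lambda}$; the presence of the seed westward sector at $v$ then forces that zero to have real part $\le \mathrm{Re}(v)$, contradicting the minimality of $s_{k,*}$.

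The crux is establishing that $K$ cannot extend indefinitely to the west. For this I would use that $|\zeta_{A,\Lambda}(\sigma+it)|\to\infty$ as $\sigma\to-\infty$ uniformly on compact $t$-ranges within $S_k$, a consequence of a dominant-term analysis of the series $\sum a_n e^{-\lambda_n s}$ (the largest-$\lambda_n$ present term dictates growth), consistent with the excerpt's remark that $\Gamma_k'$ and the $\Gamma_{k,j}$ cannot remain in a right half-plane. With this asymptotic in hand, $K$ is bounded, and the minimum-modulus principle applied to $\zeta_{A,\Lambda}$ on $K$ produces an interior zero, yielding the contradiction. The main obstacle is precisely this uniform westward asymptotic: whereas the excerpt only asserts it along specific curves, I would need it throughout the interior of the strip. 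Once it is secured, the branching picture at $v$ combined with the minimum-modulus principle closes the argument; the rest is topological bookkeeping on how the sectors at $v$ are organized inside $K$.
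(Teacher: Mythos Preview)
Your proposal has two genuine gaps, and the paper's argument avoids both by taking a completely different, purely local route.

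\textbf{Gap 1: the westward asymptotic.} You yourself flag this as the main obstacle, and it is indeed fatal. The series $\sum a_n e^{-\lambda_n s}$ converges only for $\mathrm{Re}\,s>\sigma_c$; for $\sigma\to-\infty$ you are far outside that half-plane and have no access to a ``dominant-term analysis''. The analytic continuation $\zeta_{A,\Lambda}$ need not satisfy $|\zeta_{A,\Lambda}(\sigma+it)|\to\infty$ uniformly on compact $t$-ranges as $\sigma\to-\infty$. The paper only shows (Theorem~5) that the boundary curves $\Gamma_k'$ are not confined to a right half-plane; it never claims, and does not need, growth of $|\zeta_{A,\Lambda}|$ throughout the interior of $S_k$. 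Without this, you cannot conclude that $K$ is bounded.

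\textbf{Gap 2: locating the zero.} Even granting that $K$ is bounded and hence contains a zero of $\zeta_{A,\Lambda}$ by the minimum-modulus principle, the inference ``the seed westward sector forces that zero to have real part $\le\mathrm{Re}(v)$'' is unjustified. A westward-pointing sector at $v$ only says $K$ extends locally to the west of $v$; nothing prevents $K$ from being a thin lobe that curls back east and encloses its zero at a point with real part strictly larger than $\mathrm{Re}(v)$. You would need an additional geometric constraint to pin the zero down, and none is supplied.

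\textbf{What the paper does instead.} The paper never looks at sublevel sets or invokes growth at $-\infty$. It connects $v$ to a zero $s_{k,j}$ of $\zeta_{A,\Lambda}$ by the pre-image arc of the \emph{ray} through $\zeta_{A,\Lambda}(v)$, and then runs an angle argument: writing $\alpha=\arg\zeta_{A,\Lambda}(v)\in(0,\pi)$, the angle-doubling at the branch point forces the relevant arc to leave $v$ at angle $\pi/2+\alpha/2$ with the horizontal, and a monotonicity of the tangent angle along this arc shows that if $\mathrm{Re}\,v<\mathrm{Re}\,s_{k,j}$ the tangent would have to become vertical somewhere, forcing $\alpha<0$, a contradiction. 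This is entirely local to the arc and needs no information about the behaviour of $\zeta_{A,\Lambda}$ far to the left.
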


\bigskip

\begin{proof}
We can reproduce the proof of Theorem 1 from (Barza, Ghisa, Muscutar, 2014) for arbitrary functions $\zeta _{A,\Lambda}(s)$. Let $s_{k,j}$ be the leftmost zero of $\zeta_{A,\Lambda}(s)$ from $S_{k}$ and suppose that the simple zero $v_{k,j}$ of $\zeta_{A,\Lambda}^{\prime} (s)$ is a progenitor of $s_{k,j}$. It is obviously
enough to deal with the case where $j>0$, i.e. where $\Gamma_{k,j}$ is above 
$\Gamma_{k,0}$. Then $\zeta _{A,\Lambda}(v_{k,j})$ belongs to the upper half plane
(see Fig.4) and $0<\alpha<\pi$, where $\alpha= arg\zeta _{A,\Lambda}(v_{k,j})$. The pre-image of the ray determined by $\zeta _{A,\Lambda}(v_{k,j})$ contains two curves which
are orthogonal at $v_{k,j}$. The angles at $v_{k,j}$ are doubled by $\zeta
_{A,\Lambda}(s)$, hence the four arcs of the pre-image of that ray make
angles of $\alpha/2$, $\pi/2 +\alpha/2$, $\pi +\alpha/2$ and $3\pi/2+\alpha/2
$ with a  horizontal line whose image passes through $\zeta _{A,\Lambda}(v_{k,j})$. The angle $\pi/2+\alpha/2$ made by the second arc
(which ends in $s_{k,j}$) with this line is less than the angle $\beta$ made
by the tangent to the pre-image of the respective ray at any point between $%
s_{k,j}$ and $v_{k,j}$ with the same horizontal line. If $Rev_{k,j}<Res_{k,j}$, then there must be a point on that arc for which $\beta=\pi/2$, therefore $%
\alpha<0$, which is absurd. \end{proof}

\bigskip

\textbf{Corollary}: If $\zeta _{A,\Lambda}(s)$ satisfies RH, then the zeros of $\zeta
_{A,\Lambda}^{\prime} (s)$ from every strip $S_{k}$ are at the right side of
the critical line. When $\zeta _{A,\Lambda}(s)$ is the Riemann Zeta function this corollary represents the Speiser's theorem (Speiser, 1934).

\bigskip

The existence of multiple zeros of functions obtained by analytic
continuations of Dirichlet series has been documented (probably for the
first time) in (Cao-Huu, Ghisa, Muscutar, 2016), where double zeros of a linear combination of Dirichlet L-functions have been found (see Fig. 5 below).

\bigskip

 We have shown that all those double zeros are located on the critical line. In this example the function is $L(7,2,s)+0.34375L(7,4,s)$ and the double zero is obtained for the approximate value of $s$ of $0.5+31.6i$. The double zeros we have found for all the functions of this type were located at the intersection of $\Gamma_{k,0}$ and $\Gamma_{k,1}$ or of $\Gamma_{k,0}$ and $\Gamma_{k,-1}$. We can make now a much more general affirmation about the multiple zeros of functions obtained by analytic continuation of general Dirichlet series.

\bigskip

\begin{figure}
  \includegraphics[width=\linewidth]{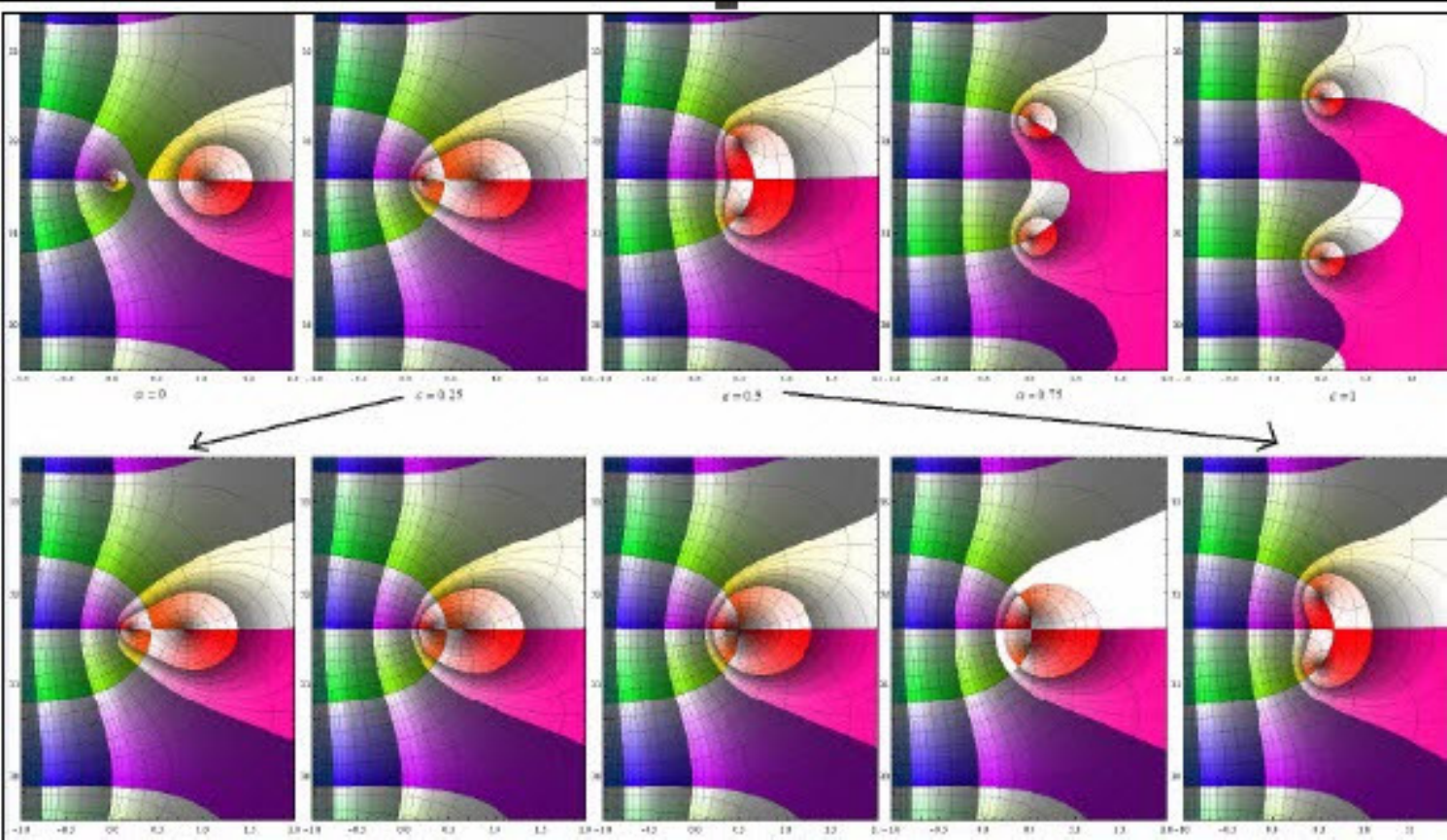}
  \caption{A double zero of a linear combination of Dirichlet L-functions}
  \label{fig:double zero}
\end{figure}

\bigskip

\begin{theorem} In every strip $S_{k}$ of a function $\zeta _{A,\Lambda}(s)$ this function has at most one double zero. Such a zero is found at the intersection of $\Gamma_{k,0}$ and $\Gamma_{k,1}$ or of $\Gamma_{k,0}$ and $\Gamma_{k,-1}$. There is no multiple zeros of $\zeta _{A,\Lambda}^{\prime}(s)$ in $S_{k}$ and hence no zero of a higher order of $\zeta _{A,\Lambda}(s)$. \end{theorem}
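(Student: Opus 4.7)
The plan is to analyse the local conformal structure of $\zeta_{A,\Lambda}$ at a hypothetical multiple zero $s_0\in S_k$ and compare it with the global combinatorics of the curves $\Gamma_{k,j}$ tiling the strip, deriving in each case a contradiction either with the color-alternating rule or with the bijectivity of $\zeta_{A,\Lambda}$ on the fundamental sub-domains bounded by those curves.

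For a zero of order $n\ge 2$, writing $\zeta_{A,\Lambda}(s)=c(s-s_0)^n+O((s-s_0)^{n+1})$, the pre-image of the real axis near $s_0$ consists of $n$ analytic arcs through $s_0$ meeting at equal angles $\pi/n$, with the $2n$ resulting sectors coloured alternately as pre-images of the upper and lower half planes. These arcs are local pieces of $n$ distinct curves $\Gamma_{k,j}$, and each such curve remains bijectively mapped onto its image interval only by ``turning'' $\pi/n$ at $s_0$, so as to pair a positive arc with a negative arc and let the image value cross $0$ monotonically. To rule out $n\ge 3$, and with it any multiple zero of $\zeta_{A,\Lambda}'$ (since a zero of order $n$ of $\zeta_{A,\Lambda}$ is a zero of order $n-1$ of $\zeta_{A,\Lambda}'$), I would argue that $n\ge 3$ distinct curves $\Gamma_{k,j}$ concurring at a single point force three consecutive curves in the vertical ordering of $S_k$ to share $s_0$, which conflicts with the fundamental sub-domain structure: the sub-strip between $\Gamma_{k,j}$ and $\Gamma_{k,j+1}$ is mapped bijectively by $\zeta_{A,\Lambda}$ onto $\mathbb C$ minus a slit, and the third curve would have to enter its interior on its way to $s_0$. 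The parallel statement for a multiple zero of $\zeta_{A,\Lambda}'$ at a point where $\zeta_{A,\Lambda}(s_0)\ne 0$ follows by the same reasoning applied to the pre-image of $C_{|\zeta_{A,\Lambda}(s_0)|}$: three or more bounded components of this pre-image would have to coalesce simultaneously at $s_0$, whereas in the family $\{C_r\}_{r>0}$ mergers occur in pairs at simple branch points only, as was used in the discussion preceding Theorem 1.

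For the remaining case $n=2$, two perpendicular $\Gamma$-curves cross at $s_0$. These must be \emph{consecutive} in the vertical ordering of $\{\Gamma_{k,j}\}_{j\in\mathbb Z}$ inside $S_k$, since otherwise an intermediate $\Gamma_{k,l}$ would have to be crossed by one of them, contradicting the bijectivity of $\zeta_{A,\Lambda}$ on the sub-strip between them. To identify the pair as $(\Gamma_{k,0},\Gamma_{k,\pm 1})$, I would trace the two positive arcs issuing from $s_0$: because $\Gamma_{k,0}$ is the only $\Gamma_{k,j}$ in $S_k$ whose image omits the ray $[1,+\infty)$, exactly one of these arcs must tend to infinity inside $S_k$ with $\zeta_{A,\Lambda}\to 1^-$ without ever attaining the value $1$, and that arc necessarily belongs to $\Gamma_{k,0}$. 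Uniqueness of the double zero in $S_k$ follows immediately from the fact that $\Gamma_{k,0}$ is mapped bijectively onto $(-\infty,1)$: it attains the value $0$ at exactly one point, so any double zero in $S_k$ must coincide with that single preimage of $0$ on $\Gamma_{k,0}$.

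The main obstacle I anticipate is the identification of $\Gamma_{k,0}$ as one of the two crossing curves. Excluding the configuration in which both crossing curves carry indices $j\ne 0$ requires careful global control of where the two positive arcs issuing from $s_0$ terminate, and it is there that the image-interval description of each $\Gamma_{k,j}$ together with the adjacency constraint plays the decisive role.
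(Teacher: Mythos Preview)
Your approach differs substantially from the paper's and carries real gaps.

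The paper does not argue via fundamental sub-domains or the global topology of the $\Gamma_{k,j}$'s at all. Instead it brings in the \emph{intertwining curves} $\Upsilon_{k,j}$ --- the components of the pre-image of the real axis by $\zeta_{A,\Lambda}'$ --- which are paired one-to-one with the $\Gamma_{k,j}$'s as set up in Section~3. The key fact is that $\Upsilon_{k,0}$ is the unique $\Upsilon$-curve in $S_k$ that carries no zero of $\zeta_{A,\Lambda}'$. At a double zero $s_0$ two $\Gamma$-curves meet and $s_0$ is a simple zero of $\zeta_{A,\Lambda}'$; each $\Gamma_{k,j}$ is intertwined with its $\Upsilon_{k,j}$, and if neither of the two associated $\Upsilon$-curves were $\Upsilon_{k,0}$, both would pass through $s_0$, forcing $s_0$ to be a double zero of $\zeta_{A,\Lambda}'$ --- a contradiction. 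Hence one of the two $\Gamma$-curves is $\Gamma_{k,0}$, and the other is then $\Gamma_{k,\pm 1}$. The exclusion of multiple zeros of $\zeta_{A,\Lambda}'$ (and hence of zeros of $\zeta_{A,\Lambda}$ of order $\ge 3$) is handled by the same device one level up, using the intertwining curves defined by the second derivative.

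Your argument, by contrast, leans on the fundamental sub-domain structure to derive contradictions, but in the paper that structure (Theorem~7) is established \emph{after} Theorem~3 and in fact invokes it (``we know that $s_{k,j}$ cannot have a higher order of multiplicity''), so as written there is a circularity. More concretely, your claim that ``the sub-strip between $\Gamma_{k,j}$ and $\Gamma_{k,j+1}$ is mapped bijectively onto $\mathbb C$ minus a slit'' is not available at this stage and is not true as stated: the fundamental domains are bounded by pieces of $\Gamma$-curves together with additional arcs $\eta_{k,j}$ through zeros of the derivative, not by consecutive $\Gamma$-curves alone. Your treatment of a multiple zero of $\zeta_{A,\Lambda}'$ at a point with $\zeta_{A,\Lambda}(s_0)\neq 0$ also begs the question: asserting that ``mergers occur in pairs at simple branch points only'' is precisely the statement that $\zeta_{A,\Lambda}'$ has only simple zeros. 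Finally, you yourself flag the identification of $\Gamma_{k,0}$ at a double zero as the main obstacle; the paper resolves exactly this point via the intertwining $\Upsilon$-curves and the special role of $\Upsilon_{k,0}$, and that is the idea your proposal is missing.
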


\bigskip

We need to postpone the proof of this theorem for a while.

\bigskip

\bigskip

\textbf{\textbf{3. Intertwining Curves}}

\bigskip

When studying functions $z=\zeta _{A,\Lambda }(s)$ it is useful to consider
besides the planes $(s)$ and $(z)$ also a plane $(w),$ where $w=\zeta
_{A,\Lambda }^{\prime }(s).$ Sometimes the planes $(z)$ and $(w)$ will be
identified in order to make more obvious certain relations between the
configurations defined by the two functions in the respective planes. The
configurations we have in view are pre-images by both $\zeta _{A,\Lambda }(s)$
and by $\zeta _{A,\Lambda }^{\prime }(s),$ of some curves or domains.

\bigskip

Regarding the pre-image by $\zeta_{A,\Lambda }(s)$ and by $\zeta _{A,\Lambda
}^{\prime }(s)$ of the real axis it has been found (see Ghisa, 2014, 2016b)
 that the components of these pre-images are paired in such a way that only
the components of the same pair can intersect each other. The respective
pairs form the so-called $\mathit{intertwining}$ $\mathit{curves}$.

\bigskip

Three kinds of intertwining curves have been distinguished (Ghisa, 2013, 2014, 2016a, 2016b), namely:

(a)\qquad\ $\Gamma _{k}^{\prime }$ and $\Upsilon _{k}^{\prime },$ $k\in 
%TCIMACRO{\U{2124} }%
%BeginExpansion
\mathbb{Z}
%EndExpansion
,$ which are mapped bijectively by $\zeta _{A,\Lambda }(s),$ and by $\zeta
_{A,\Lambda }^{\prime }(s)$ onto the interval $(1,+\infty ),$ respectively $%
(-\infty ,0),$

(b)\qquad $\Gamma _{k,0}$ and $\Upsilon _{k,0},$ $k\in 
%TCIMACRO{\U{2124} }%
%BeginExpansion
\mathbb{Z}
%EndExpansion
,$ which are mapped bijectively by $\zeta _{A,\Lambda }(s),$ and by $\zeta
_{A,\Lambda }^{\prime }(s)$ onto the interval $(-\infty ,1),$ respectively $%
(0,+\infty )$

(c)\qquad $\Gamma _{k,j}$ and $\Upsilon _{k,j},$ $j\neq 0,$ $k\in 
%TCIMACRO{\U{2124} }%
%BeginExpansion
\mathbb{Z}
%EndExpansion
,$ $j\in J_{k}\backslash \{0\}=$ (a finite set of integers), which are
mapped bijectively by $\zeta _{A,\Lambda }(s),$ respectively by $\zeta
_{A,\Lambda }^{\prime }(s)$ onto the whole real axis.\bigskip

\bigskip

\begin{theorem}
The intertwining curves touch each other at the points where the tangent to $%
\Gamma _{k,j}$, respectively $\Gamma _{k}^{\prime }$ is horizontal.
Vice-versa, if at a point of such a curve the tangent is horizontal, then a
component of the pre-image of the real axis by $\zeta _{A,\Lambda
}^{\prime}(s)$ passes also through that point.
\end{theorem}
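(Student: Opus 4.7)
The plan is to exploit the defining property of a $\Gamma$-curve -- that $\zeta_{A,\Lambda}(s)$ is real along it -- and differentiate along a smooth parametrization. If $s(t)$ parametrizes $\Gamma_{k,j}$ (or $\Gamma_{k}^{\prime}$) with tangent vector $T(t)=s'(t)$, then from $\zeta_{A,\Lambda}(s(t)) \in \mathbb{R}$ we obtain $\zeta_{A,\Lambda}^{\prime}(s(t)) \cdot T(t) \in \mathbb{R}$; equivalently, at any regular point,
\[
\arg \zeta_{A,\Lambda}^{\prime}(s) + \arg T \equiv 0 \pmod{\pi}.
\]
This single identity will encode both directions of the theorem.

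For the forward direction, suppose the tangent $T$ at $s_0 \in \Gamma_{k,j}$ is horizontal, so $\arg T \in \{0,\pi\}$. The identity forces $\zeta_{A,\Lambda}^{\prime}(s_0) \in \mathbb{R}$, so $s_0$ lies in the pre-image of the real axis under $\zeta_{A,\Lambda}^{\prime}$, which is exactly the union of the $\Upsilon$-curves of types (a)--(c) listed just above; hence some $\Upsilon$-component passes through $s_0$. For the converse, if a component of the pre-image of $\mathbb{R}$ by $\zeta_{A,\Lambda}^{\prime}$ passes through $s_0 \in \Gamma_{k,j}$, then $\zeta_{A,\Lambda}^{\prime}(s_0) \in \mathbb{R}$; combined with $\zeta_{A,\Lambda}^{\prime}(s_0) \cdot T \in \mathbb{R}$ this forces $T \in \mathbb{R}$, i.e., the tangent is horizontal. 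The same one-line argument applies to $\Gamma_{k}^{\prime}$, since $\zeta_{A,\Lambda}$ is real along it as well.

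The delicate case is $\zeta_{A,\Lambda}^{\prime}(s_0)=0$, that is, $s_0$ a branch point on $\Gamma_{k,j}$. Such a point lies automatically on an $\Upsilon$-curve (since $0 \in \mathbb{R}$), but the identity above becomes vacuous and $\Gamma_{k,j}$ can split or carry a cusp at $s_0$. I would treat this via the local expansion $\zeta_{A,\Lambda}(s) - \zeta_{A,\Lambda}(s_0) \sim c(s-s_0)^{m}$ with $m \geq 2$: this prescribes the tangent directions of the $m$ branches of $\Gamma_{k,j}$ at $s_0$ by the angle-multiplication argument already used at the point $v_{k,j}$ in the proof of Theorem 2, and one then checks branch by branch that a horizontal branch of $\Gamma$ occurs precisely when a matching branch of some $\Upsilon$-curve is present.

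The main obstacle I foresee is exactly this branch-point bookkeeping: labeling the several branches of $\Gamma_{k,j}$ and of the $\Upsilon$-curves meeting at a common critical point so that the correspondence between ``horizontal branch of $\Gamma$'' and ``branch of $\Upsilon$ through $s_0$'' is stated cleanly, rather than merely asserting that ``some branches meet there''. Once this is sorted, the generic (non-branch) case reduces, as above, to a single identity about $\arg \zeta_{A,\Lambda}^{\prime}(s)$, and the theorem follows in both directions uniformly for the three families (a)--(c).
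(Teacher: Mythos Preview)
Your approach is essentially the paper's: there the curve is parametrized by its real image value, $\zeta_{A,\Lambda}(s(x))=x$, and differentiation gives $\zeta_{A,\Lambda}^{\prime}(s(x))\,s^{\prime}(x)=1$, so that $\arg\zeta_{A,\Lambda}^{\prime}(s)=-\arg s^{\prime}(x)$, which is exactly your identity $\arg\zeta_{A,\Lambda}^{\prime}(s)+\arg T\equiv 0$ in the special parametrization where the product is $1$ rather than merely real. Your separate treatment of the branch-point case $\zeta_{A,\Lambda}^{\prime}(s_0)=0$ is more careful than the paper, which does not address it.
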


\bigskip

\begin{proof}
Indeed, suppose that $s=s(x)$ is the equation of a curve $\Gamma
_{k}^{\prime }$ or of a curve $\Gamma _{k,j} $ such that $\zeta _{A,\Lambda
}(s(x))=x$. Then

\bigskip

$\qquad (4) \qquad\ \zeta_{A,\Lambda}^{\prime}(s(x) )s^{\prime}(x)=1$.

\bigskip

The equation (4) shows that the argument of $\zeta _{A,\Lambda
}^{\prime}(s(x))$ is opposite to that of $s^{\prime}(x)$, therefore they
cancel simultaneously, and when one is ${\pi}$, the other should be $-{\pi}$%
. Yet these values of the argument of a point means that the respective
point is on the real axis, therefore $s(x)$ belongs to both pre-images of the
real axis, by $\zeta _{A,\Lambda }(s)$ and $\zeta _{A,\Lambda }^{\prime}(s)$ which
completely proves the theorem. \end{proof}

\bigskip

\textbf{Remark}: The Theorem 4 is a corollary of a much more general property which says that if $z=f(s)$ is an analytic function in a domain $D$ of the complex plane and $\gamma$ is the image by $f(s)$ of a smooth curve $\Gamma: s=s(z)$, then denoting by $\Upsilon$ the pre-image by $f^{\prime}(s)$ of $\gamma$, at every point $s_{0}$ where $\Gamma$ and $\Upsilon$ intersect each other we have $argf^{\prime}(s_{0}) + args^{\prime}(z_{0})=0(mod 2\pi)$.

\bigskip

Indeed, suppose that the planes $(z)$ and $(w)$ are identified, where $w=f^{\prime}(s)$ and write $S=S(w)$ for the curve $\Upsilon$. Then at an intersection point $s_{0}$ of $\Gamma$ and $\Upsilon$ we have $s_{0}=s(z_{0})=S(f^{\prime}(s_{0}))$ and $f(s(z))=z$ implies $f^{\prime}(s(z))s^{\prime}(z)=1$, hence $f^{\prime}(s_{0}))s^{\prime}(z_{0})=1$, etc.

\begin{figure}
  \includegraphics[width=\linewidth]{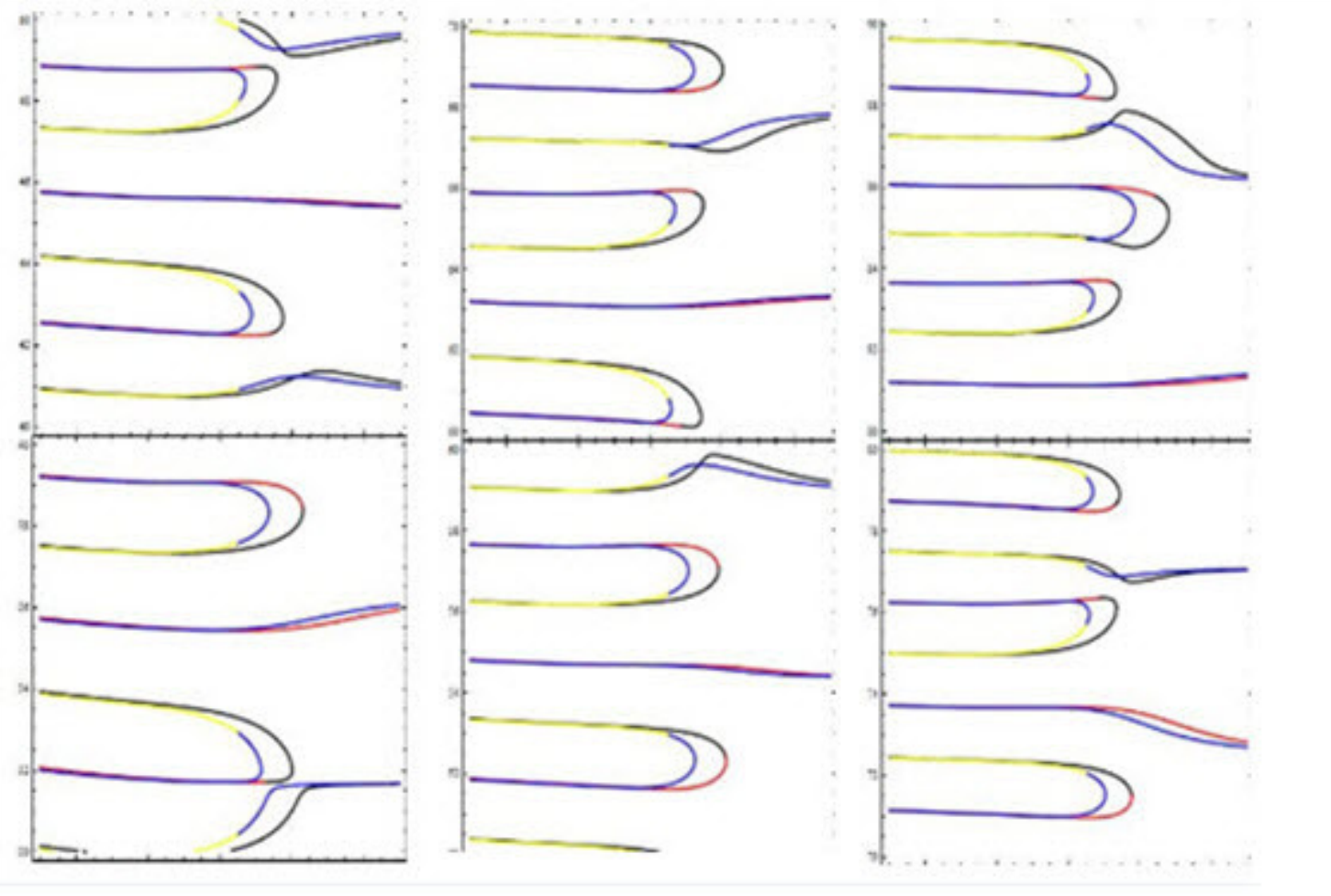}
  \caption{Intertwining curves}
  \label{fig:i.c.}
\end{figure}

\bigskip

We can show now that:

\bigskip

\begin{theorem}
No strip $S_{k}$ can be included in a right half plane.
\end{theorem}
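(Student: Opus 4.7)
My plan is proof by contradiction. Suppose some strip $S_{k}$ is contained in a right half plane $\{\mathrm{Re}(s) > M\}$ for some finite $M$.

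My first step would invoke the uniform convergence recalled in the introduction: $\zeta_{A,\Lambda}(\sigma + it) \to 1$ uniformly in $t$ as $\sigma \to +\infty$, so for every $\epsilon > 0$ there is a $\sigma_{\epsilon}$ with $\zeta_{A,\Lambda}$ mapping $\{\mathrm{Re}(s) > \sigma_{\epsilon}\}$ into $D(1,\epsilon)$. For $k \neq 0$ the strip $S_{k}$ contains the curve $\Gamma_{k,0}$, mapped bijectively onto $(-\infty, 1)$, while for $k = 0$ the strip $S_{0}$ contains the pole $s = 1$. In either case $\zeta_{A,\Lambda}$ is unbounded on $S_{k}$, so choosing $\epsilon < 1$ rules out $M \geq \sigma_{\epsilon}$. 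Therefore $M < \sigma_{\epsilon}$ for every $\epsilon \in (0,1)$, which forces $M$ to be bounded above by the abscissa of convergence $\sigma_{c}$.

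Next I would pin down the asymptotic structure forced on the boundary and interior curves of $S_k$. Since $\zeta_{A,\Lambda} \to -\infty$ along $\Gamma_{k,0}$, the point $s$ must escape to infinity in $s$-space; combined with $M < \mathrm{Re}(s) \leq \sigma_{\epsilon}$, this escape must satisfy $|\mathrm{Im}(s)| \to \infty$ at bounded real part. The same reasoning applied to the ``$\zeta \to +\infty$'' ends of $\Gamma_{k}^{\prime}$ and $\Gamma_{k+1}^{\prime}$ shows that all three curves $\Gamma_{k}^{\prime}$, $\Gamma_{k+1}^{\prime}$, and $\Gamma_{k,0}$ must terminate at $|\mathrm{Im}(s)| = \infty$ within the vertical band $M < \mathrm{Re}(s) \leq \sigma_{\epsilon}$.

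Finally I would extract the contradiction. Let $\phi : \mathbb{C} \setminus [1, +\infty) \to S_{k}$ be the conformal inverse of $\zeta_{A,\Lambda}$ on a fundamental subdomain of $S_{k}$ (taken inside $S_k$ if additional zeros of $\zeta_{A,\Lambda}^{\prime}$ are present). Under the standing assumption, $\phi$ would embed the slit plane into $\{\mathrm{Re}(s) > M\}$ with $\phi(z) \to \infty$ and $\mathrm{Re}\,\phi(z) \to +\infty$ as $z \to 1$, while $\mathrm{Re}\,\phi(z)$ stays bounded and $|\mathrm{Im}\,\phi(z)| \to \infty$ as $z \to \infty$. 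The main obstacle is converting this geometric picture into a rigorous contradiction; my plan is to compose with $\log(\phi - M)$, obtaining a holomorphic map from the simply connected slit plane into the horizontal strip $\{|\mathrm{Im}| < \pi/2\}$ in which both prime ends of the slit plane would be sent to $\mathrm{Re} = +\infty$ but with inconsistent imaginary-part limits, violating the conformal structure. An alternative is to apply the color-alternating rule together with the forced clustering of bounded components of $\zeta_{A,\Lambda}^{-1}(C_{r})$ for large $r$ at the common infinite ends of the three terminating curves, in a manner incompatible with the pre-image disjointness established earlier.
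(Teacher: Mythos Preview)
Your proposal takes a genuinely different route from the paper and, as you yourself flag, does not close. The paper's argument is short and relies entirely on the relation~(4): parametrize $\Gamma_{k}'$ by $s=s(x)$ with $\zeta_{A,\Lambda}(s(x))=x\in(1,+\infty)$, so that $\zeta_{A,\Lambda}'(s(x))\,s'(x)=1$. If $\Gamma_{k}'$ stayed in a right half plane, then as $x\to+\infty$ the point $s(x)$ goes to $\infty$ while remaining in that half plane; the paper then argues that along this unlimited continuation one has $\zeta_{A,\Lambda}'(s(x))\to\infty$, forcing $s'(x)\to 0$, whereas the same hypothesis was supposed to force $s'(x)\to\infty$ (equivalently $\zeta_{A,\Lambda}'(s(x))\to 0$), and this is impossible since Theorem~1 excludes zeros of $\zeta_{A,\Lambda}'$ on $\Gamma_{k}'$. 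Everything happens on the single boundary curve $\Gamma_{k}'$ via the derivative relation; no conformal inverse $\phi$, no prime-end analysis, and no appeal to the interior curve $\Gamma_{k,0}$ is used.

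Your steps~1 and~2 are fine and in fact reproduce the preparatory observation the paper makes implicitly (that the unbounded end of $\Gamma_{k}'$ cannot escape to $\mathrm{Re}\,s=+\infty$). The gap is in step~3. Composing with $\log(\phi-M)$ gives an injective holomorphic map of the slit plane into the horizontal strip $|\mathrm{Im}\,w|<\pi/2$, and you observe that the two prime ends $z=1$ and $z=\infty$ are both sent to $\mathrm{Re}\,w=+\infty$, at heights $0$ and $\pm\pi/2$ respectively. That configuration is \emph{not} a contradiction: a conformal embedding of a simply connected domain into a strip may perfectly well send distinct prime ends to the same infinite end of the strip at different heights (think of the image as a ``forked'' sub-strip). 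So the $\log(\phi-M)$ plan, as stated, does not yield a violation of any conformal principle. Your alternative via the color-alternating rule is too vague to evaluate; you would need to exhibit a specific pair of pre-image components forced to intersect, and nothing in the setup so far produces one. If you want a route that actually closes, the paper's use of~(4) is the one to take: it converts the question into the behaviour of $\zeta_{A,\Lambda}'$ along the single curve $\Gamma_{k}'$ and invokes Theorem~1 directly.
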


\bigskip

\begin{proof}
The formula (4) written for $\Gamma _{k}^{\prime }$ tells us that when $%
s^{\prime}(x)$ tends to $\infty$ then $\zeta _{A,\Lambda }^{\prime}(s(x))$
must tend to $0$. Yet, there is no zero of $\zeta _{A,\Lambda
}^{\prime}(s(x))$ on any $\Gamma _{k}^{\prime }$. On the other hand,  $\zeta_{A,\Lambda }^{\prime}(s(x))$  is an unlimited continuation of $\zeta _{A,\Lambda}^{\prime}(s)$ alongside  $\Gamma _{k}^{\prime }$ , hence when $s(x)$  tends to $\infty$ we have that  $\zeta_{A,\Lambda }^{\prime}(s(x))$  tends to $\infty$  not to $0$ and this is a contradiction. The conclusion is that the geometry of the pre-image of the real axis is in the whole complex plane similar to that we can see in a bounded region of the plane in the figures 2,3,6,7.\end{proof}

\bigskip

Summarizing these facts and having in view Ghisa 2016b we can say:

\bigskip

\begin{theorem}
The variable $\sigma$ takes any real value on every curve $\Gamma
_{k}^{\prime }$ . Consecutive curves $\Gamma _{k}^{\prime }$ form infinite
strips $S_{k}$ which are mapped (not necessarily bijectively) onto the whole
complex plane with a slit alongside the interval $[1,+\infty)$ of the real
axis. If $S_{k}$ contains $m_{k}$ zeros of the function $\zeta _{A,\Lambda
}(s)$, then it will contain $m_{k}-1$ zeros of $\zeta _{A,\Lambda }^{\prime
}(s)$. It also contains a unique unbounded component of the pre-image of the
unit circle and a unique component of the pre-image of the real axis which is
mapped bijectively by $\zeta _{A,\Lambda }(s)$ onto the interval $(-\infty,1)
$ as well as $m_{k}-1$ components  $\Gamma _{k,j}$ of the pre-image of the
real axis which are mapped bijectively onto the whole real axis. If $\zeta
_{A,\Lambda }(s)$ satisfies a Riemann type of functional equation, every
strip $S_{k}$, $k\neq {0}$ contains a finite number of zeros of this
function. The strip $S_{0}$ may contain infinitely many zeros of $\zeta
_{A,\Lambda }(s)$. \end{theorem}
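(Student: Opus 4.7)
The plan is to stitch the statement together from Theorems 1--5 of the present paper and the results of (Ghisa, 2016b), supplemented by a branch-point counting argument inside each strip. The first claim --- that $\sigma$ ranges over all of $\mathbb{R}$ on every $\Gamma_{k}^{\prime}$ --- follows directly from Theorem 5, which forbids $S_{k}$ from being contained in a right half plane, combined with the bijectivity of $\zeta_{A,\Lambda}$ from $\Gamma_{k}^{\prime}$ onto $(1,+\infty)$: both ends of the interval $(1,+\infty)$ must correspond to points where $\sigma \to \pm\infty$ along $\Gamma_{k}^{\prime}$. Surjectivity of $\zeta_{A,\Lambda}$ from $S_{k}$ onto $\mathbb{C}\setminus[1,+\infty)$ is then a standard open-mapping and monodromy argument: the two boundary curves cover the slit, and analytic continuation from any interior disc of $S_{k}$ reaches every point of the complement.

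For the count $m_{k} \mapsto m_{k}-1$, I would use the circle-family picture of Section 2. For sufficiently small $r$ the portion of the pre-image of $C_{r}$ lying in $S_{k}$ consists of $m_{k}$ disjoint Jordan curves, each winding once around a single zero. As $r$ grows these curves expand and fuse pairwise at branch points of $\zeta_{A,\Lambda}$, that is, at the interior zeros of $\zeta_{A,\Lambda}^{\prime}$, which by Theorem 1 lie strictly inside $S_{k}$. Each fusion reduces the component count by one, and the process terminates when only the unique unbounded curve $\gamma_{r}$ remains, which leaves the strip through the $\Gamma_{k,j}$-curves and not across $\Gamma_{k}^{\prime}$. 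This yields exactly $m_{k}-1$ zeros of $\zeta_{A,\Lambda}^{\prime}$ in $S_{k}$. The unique unbounded pre-image component of the unit circle is Theorem 2 of (Ghisa, 2016b); the unique $\Gamma_{k,0}$ mapping bijectively onto $(-\infty,1)$ is obtained by continuing the value $\zeta_{A,\Lambda}(u_{k,0})=1$ along the ray $(-\infty,1)$ and using Theorem 1 to exclude termination on a $\Gamma_{k}^{\prime}$; the remaining $m_{k}-1$ curves $\Gamma_{k,j}$ pass through the other zeros and are mapped bijectively onto $\mathbb{R}$ because any obstruction to bijectivity would require an additional branch point already accounted for in the fusion picture.

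The conditional finiteness statement for $k \neq 0$ exploits the functional equation to reflect the zero-free half-plane $\mathrm{Re}\, s > \sigma_{\epsilon}$ (noted in the introduction) into a corresponding zero-free left half-plane, so that all non-trivial zeros lie in a bounded vertical band. For $k \neq 0$ the strip $S_{k}$ meets that band in a region of finite vertical extent and hence contains only finitely many zeros. The strip $S_{0}$ is exceptional precisely because it contains the point $s=1$ and may therefore extend unboundedly along the band in the imaginary direction. The main technical obstacle is the counting step in the second paragraph: one must verify that no branch point escapes the accounting by being pushed onto a boundary curve as $r$ varies, nor double-counted. Theorem 1 was engineered precisely to exclude the boundary curves $\Gamma_{k}^{\prime}$ as locations of zeros of the derivative, which is what renders the final tally exact.
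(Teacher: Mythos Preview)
Your proposal is correct and follows essentially the same route as the paper. The paper, too, declares that everything except the $m_{k}\mapsto m_{k}-1$ count is a summary of Theorems~1--5 and of (Ghisa, 2016b); for the count it uses the same circle-family fusion picture you describe, phrasing it combinatorially as a complete binary tree whose $m_{k}$ leaves are the zeros of $\zeta_{A,\Lambda}$ and whose internal nodes are the fusion points (zeros of $\zeta_{A,\Lambda}^{\prime}$), hence $m_{k}-1$ of them. Your sequential ``each fusion reduces the component count by one'' is the same argument in different words; the only care needed---which you flag---is that Theorem~1 keeps all branch points strictly interior to $S_{k}$, so none are lost to the boundary.
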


\bigskip

\begin{figure}
  \includegraphics[width=\linewidth]{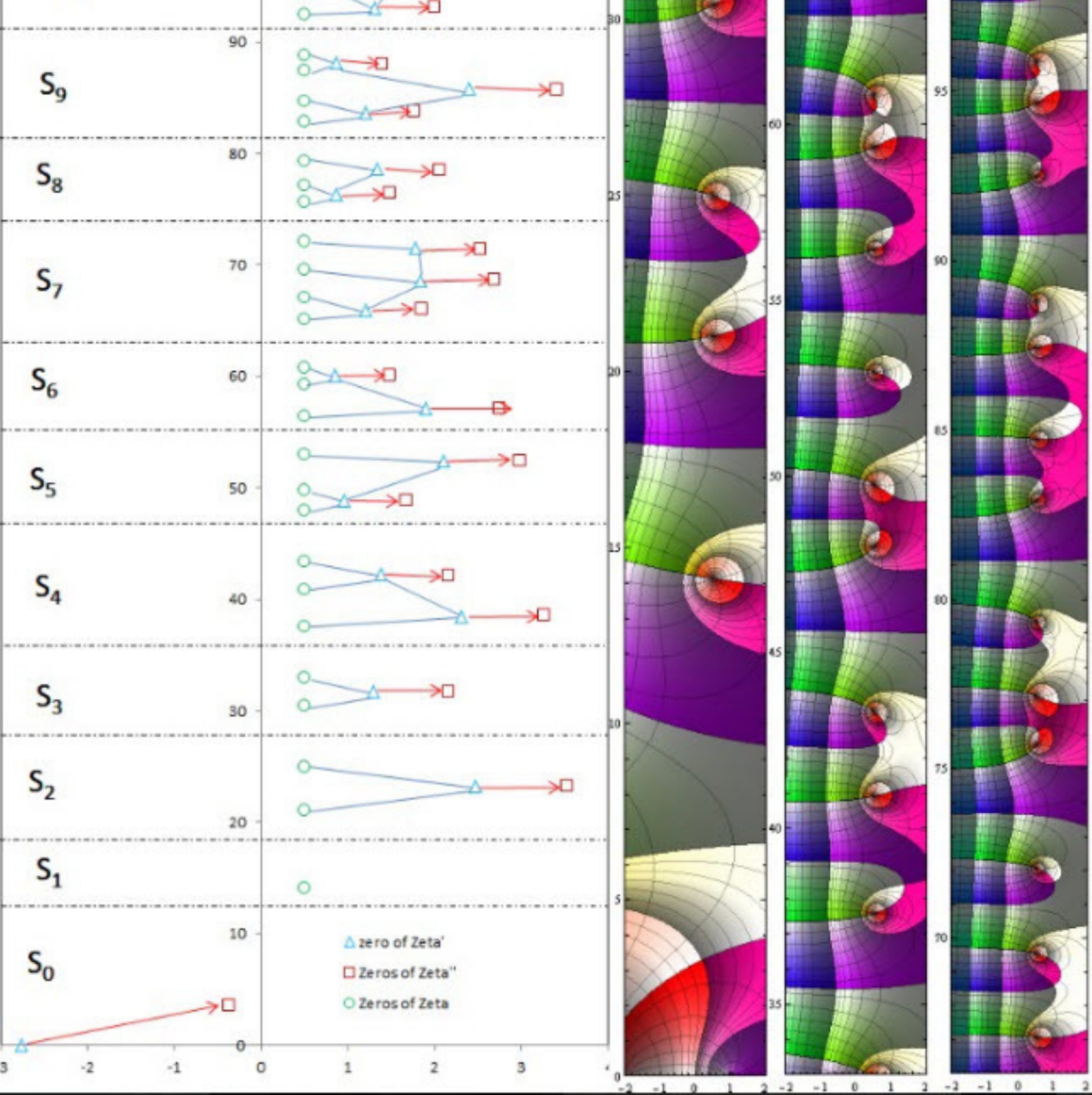}
  \caption{The zeros of the function and of its first two derivatives}
  \label{fig: trees}
\end{figure}

\begin{proof} We only need to justify the numbers $m_{k}$ and $m_{k}-1$ to which the
theorem makes reference. We have seen that every zero of $\zeta _{A,\Lambda
}^{\prime}(s)$ is obtained when two components of the pre-image of some
circle $C_{r}$ centred at the origin and of radius $r$ come into contact.
If we consider the zeros of $\zeta _{A,\Lambda }(s)$ as the liefs of a binary
tree whose internal nodes are obtained in this way, that tree is a complete
binary tree and it is known that it must have exactly $m_{k}-1$ internal
nodes.
\end{proof}

\bigskip

We can prove now also the Theorem 3.

\bigskip

\begin{proof}
Exactly $2$ curves $\Gamma _{k,j}$ and $\Gamma _{k,j\prime}$ should meet at a double zero $s_{0}$ of $\zeta _{A,\Lambda }(s)$ belonging to  $S_{k}$. But $s_{0}$ is also a simple zero of $\zeta _{A,\Lambda }^{\prime}(s)$ and therefore a curve, say $\Upsilon _{k,j}$ should pass through $s_{0}$. Yet, $\Gamma _{k,j\prime}$ intertwines with a curve $\Upsilon _{k,j\prime}$
and unless this curve is $\Upsilon _{k,0}$ (which does not contain any zero of
 $\zeta _{A,\Lambda }^{\prime}(s)$ ) the point $s_{0}$ would be a double zero of 
for $\zeta _{A,\Lambda }^{\prime}(s)$, which is absurd.

\bigskip

 Hence , necessarily one of the curves $\Gamma _{k,j}$ passing through $s_{0}$ is $\Gamma _{k,0}$.  This shows that there can be only one double zero in $S_{k}$ and the other curve passing through $s_{0}$ in that case is either $\Gamma _{k,1}$, or $\Gamma _{k,-1}$.
 
\bigskip

We notice that the curve $\Upsilon _{k,0}$ does not contain any zero of  
$\zeta _{A,\Lambda}^{\prime}(s)$, hence it cannot pass through a multiple zero of order $m$ of  $\zeta _{A,\Lambda }^{\prime}(s)$.
 Since every curve passing through that zero has an intertwining curve defined by the second derivative of  $\zeta _{A,\Lambda }(s)$, the respective point should be a multiple zero of order $m$ of this second derivative, which is absurd. Therefore  $\zeta _{A,\Lambda }^{\prime}(s)$ has no multiple zero and then  $\zeta _{A,\Lambda }(s)$ cannot have any zero of the higher order than $2$ .
\end{proof}

\textbf{\textbf{4. Fundamental Domains}}

For $j\neq 0,$ the curves $\Gamma _{k,j}$ as well as $\Upsilon _{k,j}$ are
parabola like curves with branches extending to infinity as $\sigma
->-\infty $. Therefore we can distinguish between the interior and the
exterior of such curves. They can be viewed as oriented curves, with the
same orientation as the real axis whose components of the pre-image they are.
By the same rule $\Gamma _{k}^{\prime }$ and $\Upsilon _{k}^{\prime },$ as
well as $\Upsilon _{k,0}$ are also oriented, the positive orientation of $%
\Gamma _{k}^{\prime }$ and of $\Upsilon _{k,0}$ being from the right to the
left, while that of $\Upsilon _{k}^{\prime },$ is from the left to the
right.  Some curves $\Gamma _{k,j}$, $j\neq {0}$ can contain in interior
some other curves of the same type (\textit{embraced curves}) and by the
color alternating rule the orientation of the \textit{embracing curve} and
that of the embraced curves must be different. We did not find any instance
where an embraced curve is in turn embracing, yet there is no reason to
believe that such a situation is impossible.

%\bigskip

\begin{figure}
  \includegraphics[width=\linewidth]{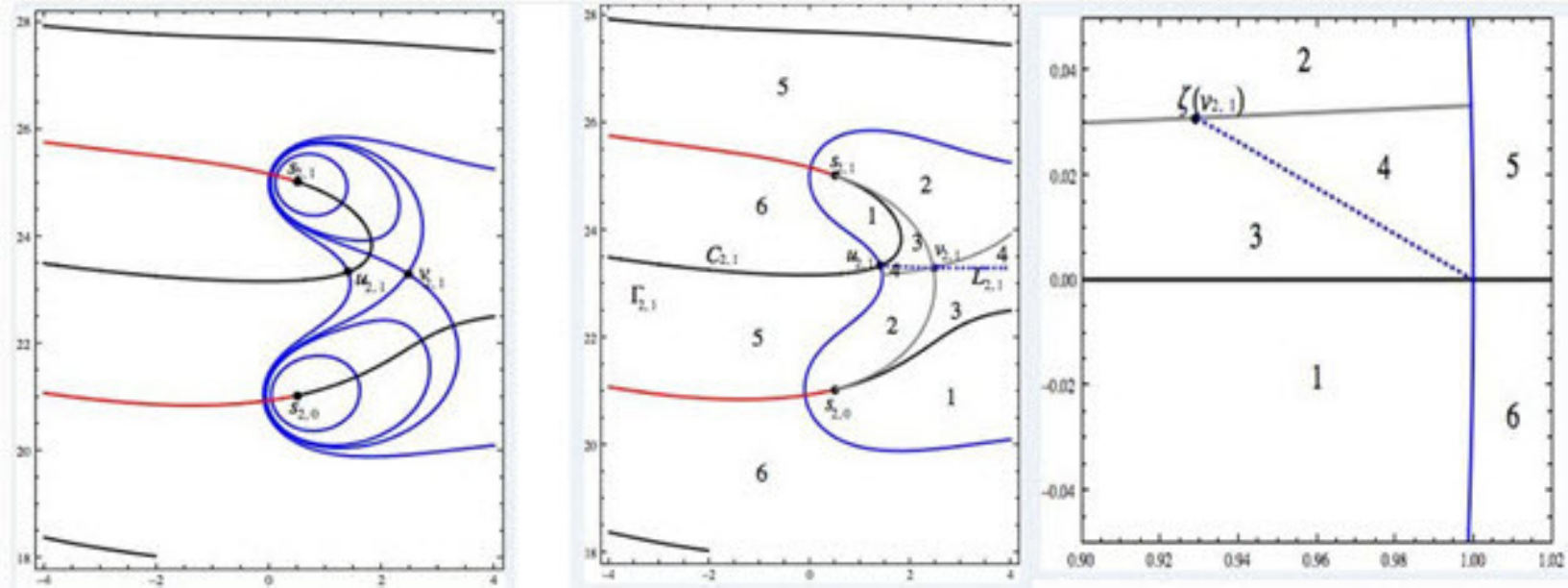}
  \caption{Two fundamental domains and their images}
  \label{fig: f. d.}
\end{figure}

\begin{theorem} If the strip $S_{k}$ contains $m_{k}$ zeros counted with multiplicities, then $S_{k}$ can be partitioned into $m_{k}$ sub-strips which are fundamental domains for  $\zeta _{A,\Lambda }(s)$

\end{theorem}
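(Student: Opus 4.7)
The plan is to partition $S_{k}$ by cutting along arcs emanating from the $m_{k}-1$ simple zeros of $\zeta'_{A,\Lambda}$ that live inside $S_{k}$ (which exist and are simple by Theorems 3 and 6). These zeros $v_{1},\dots,v_{m_{k}-1}$ are precisely the branch points of the map $\zeta_{A,\Lambda}:S_{k}\to\mathbb{C}\setminus[1,+\infty)$ from Theorem 6, and because each is a \emph{simple} zero of the derivative, the map is locally two-to-one around $v_{i}$, i.e.\ $\zeta_{A,\Lambda}(s)-\zeta_{A,\Lambda}(v_{i})\sim c_{i}(s-v_{i})^{2}$.

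First I would choose, at each branch point $v_{i}$, a suitable sub-arc of the intertwining curve $\Upsilon$ through $v_{i}$ as a prospective cut; such a curve is guaranteed to exist by Theorem 4 together with the accompanying remark, and by Theorem 5 the strip does not stay in a right half-plane, so there is room for the arc to be extended to the boundary of $S_{k}$. Specifically, at each $v_{i}$ I would select a half-branch of $\Upsilon$ that disconnects the component of $S_{k}\setminus\{\text{earlier cuts}\}$ containing $v_{i}$ into two sub-regions, each with strictly fewer branch points in its interior. Iterating this splitting $m_{k}-1$ times produces $m_{k}$ open sub-strips $D_{1},\dots,D_{m_{k}}$, each containing exactly one zero of $\zeta_{A,\Lambda}$ and no branch point in its interior.

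The next step is to verify that every $D_{j}$ is a fundamental domain, i.e.\ that $\zeta_{A,\Lambda}$ restricted to $D_{j}$ is a conformal bijection onto $\mathbb{C}$ minus a suitable union of slits. Since $D_{j}$ is simply connected and free of branch points, the restriction is a local homeomorphism; combined with the fact that its boundary (pieces of $\Gamma_{k}^{\prime}$, $\Gamma_{k+1}^{\prime}$, the chosen $\Upsilon$-cuts, and possibly $\Gamma_{k,0}$) is mapped in a monotone way onto the real axis and to the chosen cuts, a standard argument-principle or monodromy argument yields bijectivity. The total degree adds up correctly because $\sum_{j}\deg(\zeta_{A,\Lambda}|_{D_{j}})$ must equal the branched-cover degree of $\zeta_{A,\Lambda}|_{S_{k}}$, which is $m_{k}$ by Theorem 6.

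The hard part will be the combinatorial selection of which half-branch of $\Upsilon$ to cut along at each $v_{i}$, and verifying that the resulting decomposition is well defined, independent of the order of cuts. The most delicate case is the strip that contains the exceptional double zero identified in Theorem 3: there $\Gamma_{k,0}$ and $\Gamma_{k,\pm 1}$ already cross inside $S_{k}$, and the cuts must be threaded around that crossing without producing spurious components or failing to separate the zeros. A secondary technical point is ensuring that each cut actually reaches $\partial S_{k}$ rather than terminating in the interior; this should follow from the simplicity of the zeros of $\zeta'_{A,\Lambda}$ (Theorem 3) together with the non-confinement of $S_{k}$ to a half-plane (Theorem 5), which prevents the $\Upsilon$-arc from looping back on itself.
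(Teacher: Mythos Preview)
Your approach differs substantially from the paper's, and the choice of cutting curves is where the gap lies. The paper does not cut along the intertwining curves $\Upsilon$ at all. Instead, for each branch point $v_{k,j}$ it takes the pre-image \emph{under $\zeta_{A,\Lambda}$ itself} of the straight segment joining $z=1$ to the critical value $\zeta_{A,\Lambda}(v_{k,j})$; this pre-image contains an arc $\eta_{k,j}$ through $v_{k,j}$ whose two ends land at the points of $\Gamma_{k,j}$ and $\Gamma_{k,j'}$ where the function takes the value $1$ (or at $\infty$ when one of the curves is $\Gamma_{k,0}$). The sub-strip $\Omega_{k,j}$ is then bounded by $\eta_{k,j}$ together with the branches of $\Gamma_{k,j}$ and $\Gamma_{k,j'}$ lying over $[1,+\infty)$, and \emph{by construction} its image is explicitly $\mathbb{C}$ with a slit along $[1,+\infty)$ followed by the segment from $1$ to $\zeta_{A,\Lambda}(v_{k,j})$. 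The double-zero case is handled by letting the portion of $\Gamma_{k,0}$ over $[0,1]$ serve as part of the boundary.

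Your proposal to cut along $\Upsilon$-arcs has a genuine gap: an $\Upsilon$ curve is defined by the condition $\zeta'_{A,\Lambda}(s)\in\mathbb{R}$, so its image under $\zeta_{A,\Lambda}$ is an a~priori uncontrolled curve in the $z$-plane, not a segment or any recognizable slit. Consequently you cannot conclude that each $D_{j}$ is mapped onto the plane minus slits; the argument-principle/monodromy step you sketch requires knowing the image of the boundary, and with $\Upsilon$-cuts you do not. Your appeal to Theorem~5 to force the cuts out to $\partial S_{k}$ is also misplaced: the paper's cuts never terminate on $\Gamma'_{k}\cup\Gamma'_{k+1}$ but on the interior curves $\Gamma_{k,j}$ at the points where the function equals $1$. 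The idea you are missing is to choose the cuts in the $z$-plane first (segments from $1$ to each critical value) and then pull them back by $\zeta_{A,\Lambda}$; this is what makes both the boundary behaviour and the image domain transparent, and it is why no iterative or combinatorial selection of half-branches is needed.
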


\begin{proof} Suppose that curves $\Gamma _{k,j}$ and  $\Gamma _{k,j^{\prime}}$ are containing the simple zeros $s_{k,j}$ and $s_{k,j^{\prime}}$ and two components of the pre-image of a circle $C_{r}$ which are going around each one of these zeros touch at a point $v_{k,j}$. This is a zero of $\zeta _{A,\Lambda }^{\prime}(s)$. The pre-image of the segment of line connecting $z=1$ and  $%
\zeta _{A,\Lambda }(v_{k,j}) $ has as component an arc $\eta _{k,j}$
connecting the points on the two curves where $\zeta _{A,\Lambda }(s)=1$
and passing through $v_{k,j}$. If one of these curves is $\Gamma _{k,0}$ then the respective point is $\infty$ and $\eta _{k,j}$ is an unbounded curve. The strip $\Omega _{k, j}$ bounded by this curve and the branches of  $\Gamma _{k,j}$ and $\Gamma _{k,j^{\prime}}$ corresponding to the interval $[1,+{\infty})$,  turns out to be a fundamental domain of $\zeta _{A,\Lambda }(s)$.
Indeed, $\Omega _{k, j}$ is mapped conformally by $\zeta _{A,\Lambda }(s)$
onto the whole complex plane with a slit alongside the interval $[1,+{\infty}%
)$ followed by a slit alongside the segment from $z=1$ to $\zeta _{A,\Lambda
}(v_{k,j}) $. If $\Gamma _{k,j}$ embraces $\Gamma _{k,j^{\prime}}$ then $\Omega _{k, j}$ is bounded to the right. 

\bigskip

Suppose now that one of the two zeros is a double zero of $\zeta _{A,\Lambda }(s)$. We know that $\Gamma _{k,0}$ must pass through that zero and the part of $\Gamma _{k,0}$ corresponding to the interval $[0,1]$ becomes part of the boundary of a fundamental domain obtained in the same way. This time the image domain will be the complex plane with a slit alongside the positive real half axis and the interval from $z=1$ to $z= \zeta _{A,\Lambda }(v_{k,j})$.

\bigskip

We know that  $s_{k,j}$ cannot have a higher order of multiplicity and therefore the cases analysed exhaust all the possibilities. Having in view Theorem 6, we conclude that the strip $S_{k}$ can be always divided into $m_{k}$ fundamental domains.\end{proof}

\bigskip

Fig. 8 above illustrates this theorem for the case of the Riemann Zeta function and the strip $S_{2}$.

As seen in the case of Dirichlet L-functions (including the Riemann Zeta
function) the strip $S_{0}$ has infinitely many zeros, yet following the
same technique we can divide it into infinitely many fundamental domains.

If four different colors are used, say color 1 and 2 for the pre-images by $%
\zeta _{A,\Lambda }(s)$ of the positive and of the negative real half axis
and 3 and 4 for the pre-images by $\zeta _{A,\Lambda }^{\prime }(s)$ of the
same half axes, then two simple topological facts can be established:

a) The\textit{\ alternating color rule}, which states that as a point turns
indefinitely in the same direction on a circle centred at the origin, the
pre-images of this point by each one of the functions $\zeta _{A,\Lambda
}(s) $ and $\zeta _{A,\Lambda }^{\prime }(s)$ will meet alternatively the
colors 1 and 2, respectively 3 and 4. 

b) The \textit{matching color rule}, which states that when intertwining
curves meet each other, then if these are not $\Gamma _{k,0}$ and $\Upsilon_{k,0}$  color 1 will always meet color 4 and color 2 will always meet color 3. Only the curves $\Gamma_{k,0}$ and $\Upsilon_{k,0}$ can intersect each other at points where color 2 can meet color 4.

The series (1) which are Euler products display special important properties.

\bigskip

\textbf{5. Euler Products}

\bigskip

It is known that the Dirichlet L-functions are meromorphic continuations of ordinary Dirichlet series defined by  Dirichlet characters and these series can be expressed as Euler products. This property is a corollary of the fact that the Dirichlet characters are totally multiplicative functions (see for example Ghisa, 2014). Yet the property of being total multiplicative can be extended to general Dirichlet series, as done in Ghisa, 2016a, and therefore some of the general Dirichlet series $\zeta _{A,\Lambda}(s) $ (see the details in Ghisa, 2016a) can also be written as Euler products:

\bigskip

\qquad $(5)\qquad \zeta _{A,\Lambda }(s)=\Sigma _{n=1}^{\infty
}a_{n}e^{-\lambda _{n}s}=\Pi_{p\in \mathbb{P}}  (1-a_{p}e^{-\lambda _{p}s})^{-1}$,

\bigskip

where $\mathbb{P}$ is the set of prime numbers. This convention will be kept in the following for all the products and sums involving the subscript $p$. The product has the same abscissa of  convergence as the series itself. 

\bigskip

Looking for counterexamples to the Grand Riemann Hypothesis (GRH), some Dirichlet series satisfying a Riemann type of functional equation have been found, whose analytic continuation exhibit off critical line non trivial zeros, namely the Davenport and Heilbronn type of functions and linear combinations of L-functions satisfying the same functional equation. Although these are not counterexamples to GRH, their study allowed us to draw interesting conclusions. We have seen in Ghisa, 2016b that if 
$\zeta _{A,\Lambda}(s) $ does not satisfy the GRH, then for every two distinct non trivial zeros
$s_{1}=\sigma+it$ and $s_{2}=1-\sigma+it$ there is $\tau_{0}$, $0<\tau_{0}<1$ such that 
$\zeta _{A,\Lambda}^{\prime}(s(\tau_{0}))=0$, where $s(\tau)=(1-\tau)s_{1}+\tau s_{2}$, i.e. the derivative of $\zeta_{A,\Lambda}(s)$ cancels at a point $s_{0}=s(\tau_{0})$ of the interval $I$ determined by $s_{1}$ and $ s_{2}$. Moreover, $Re\zeta _{A,\Lambda}(s_{0})<1/2$.

\bigskip

Let us rephrase and give a simplified proof to Theorem 3, Ghisa, 2016b.

\bigskip

\begin{theorem} \textit{Suppose that the function (5) satisfies a Riemann type of functional equation and the respective series has the abscissa of convergence $\sigma_{c}<1/2$. 
 Then for every non trivial zero $\sigma +it$ of $\zeta_{A,\Lambda }(s)$  we have $\sigma=1/2$.} \end{theorem}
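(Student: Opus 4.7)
The plan is to argue by contradiction, using only two ingredients: the Euler product (5) and the symmetry provided by the Riemann type functional equation. The key point is that, unlike the classical proof, no reference to the intertwining-curves machinery of Sections 2--3 is needed once (5) is in hand.

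First I would show that $\zeta_{A,\Lambda}(s)$ has no zeros in the open half plane $Re\,s > \sigma_c$. In that region the Euler product (5) converges (with the same abscissa as the series itself, as recorded in the text just above the theorem). Each factor $(1 - a_p e^{-\lambda_p s})^{-1}$ is finite and non-zero there, because convergence forces $a_p e^{-\lambda_p s} \to 0$ and so no factor can equal $1$. A standard theorem on convergent infinite products of non-vanishing factors then yields $\zeta_{A,\Lambda}(s) = \prod_p (1 - a_p e^{-\lambda_p s})^{-1} \neq 0$ throughout $Re\,s > \sigma_c$.

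Next I would invoke the Riemann type functional equation. After stripping the gamma-type factors, which contribute only trivial zeros, this equation relates $\zeta_{A,\Lambda}(s)$ to $\zeta_{A,\Lambda}(1-s)$ up to non-vanishing multipliers, so every non-trivial zero $s_0 = \sigma + it$ is paired with a non-trivial zero whose real part equals $1 - \sigma$. Suppose now, for contradiction, that some non-trivial zero satisfies $\sigma \neq 1/2$. Since $\sigma_c < 1/2$, we have $\max(\sigma, 1-\sigma) > 1/2 > \sigma_c$, so either $s_0$ itself or its functional-equation partner lies in $Re\,s > \sigma_c$, contradicting the first step. Hence $\sigma = 1/2$.

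The only delicate point I anticipate is the non-vanishing statement in the full half plane $Re\,s > \sigma_c$, as opposed to the (possibly strictly smaller) half plane of absolute convergence $Re\,s > \sigma_u$. When $D = 0$ the two half planes coincide and nothing extra is needed, which already covers the case the paper is chiefly interested in. In the general case one must verify the product's non-vanishing in the conditional-convergence regime, typically by writing it as the exponential of a convergent series of logarithms; everything else in the argument is purely bookkeeping.
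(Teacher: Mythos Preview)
Your route is quite different from the paper's. The paper does \emph{not} argue non-vanishing directly from the Euler product. It supposes an off-line zero $\sigma+it$ exists, pairs it via the functional equation with $1-\sigma+it$, builds a conformal involution $\phi$ interchanging neighbourhoods of the two zeros (extended over the union $H$ of the two fundamental domains containing them), and forms $\Phi(s)=\zeta_{A,\Lambda}(s)/\zeta_{A,\Lambda}(\phi(s))\equiv 1$ on $H$. The Euler product is then inserted into $\Phi$ only at the single point $s=1/2+it$, producing an identity $\arg\Phi(1/2+it)=2k\pi$ written as a series of angles depending continuously on the parameter $\delta=\phi(1/2)$; the contradiction is that a continuous function of $\delta$ takes only values in $2\pi\mathbb{Z}$.

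The step you call ``delicate'' and then downgrade to ``bookkeeping'' is in fact the whole content of the theorem, and as written your argument has a genuine gap there. Conditional convergence of the product in $\sigma_c<\mathrm{Re}\,s\le\sigma_u$ does not by itself yield non-vanishing: the partial products may tend to $0$. Rewriting the product as $\exp\big(\sum_p\log(1-a_pe^{-\lambda_ps})^{-1}\big)$ only trades the problem for convergence of $\sum_p a_pe^{-\lambda_ps}$ in that strip, which for ordinary Dirichlet series ($\lambda_n=\log n$) is a deep prime-distribution statement and is precisely where the difficulty of RH lives. The paper's sentence ``the product has the same abscissa of convergence as the series itself'' is asserted, not proved; if you read it as convergence to a non-zero limit you have assumed what is to be shown. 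So your reduction ``non-vanishing on $\mathrm{Re}\,s>\sigma_c$ plus functional equation implies $\sigma=1/2$'' is formally correct, but the deferred step is not bookkeeping---it is exactly the obstacle the paper's $\phi,\Phi$ construction is designed to circumvent.
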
   

\bigskip

\begin{proof} Suppose that there is a zero $\sigma +it$ of  $\zeta_{A,\Lambda }(s)$ for which 
$\sigma>1/2$. Then, due to the functional equation, $1-\sigma+it$ is also a zero of $\zeta_{A,\Lambda }(s)$. There is $r>0$ such that in the components $D_{1}$ and $D_{2}$ of the pre-image of the disc centred at the origin and of radius $r$ containing the respective zeros the function $\zeta_{A,\Lambda }(s)$ is injective. Then we can define the function $\phi:D_{1}->D_{2}$ as follows:
\bigskip\noindent

 \qquad $(6) \qquad \phi(s)= \zeta_{A,\Lambda }(s)^{-1}_{ |D_{2}} \circ \zeta_{A,\Lambda }(s)$

\begin{figure}
  \includegraphics[scale=0.7]{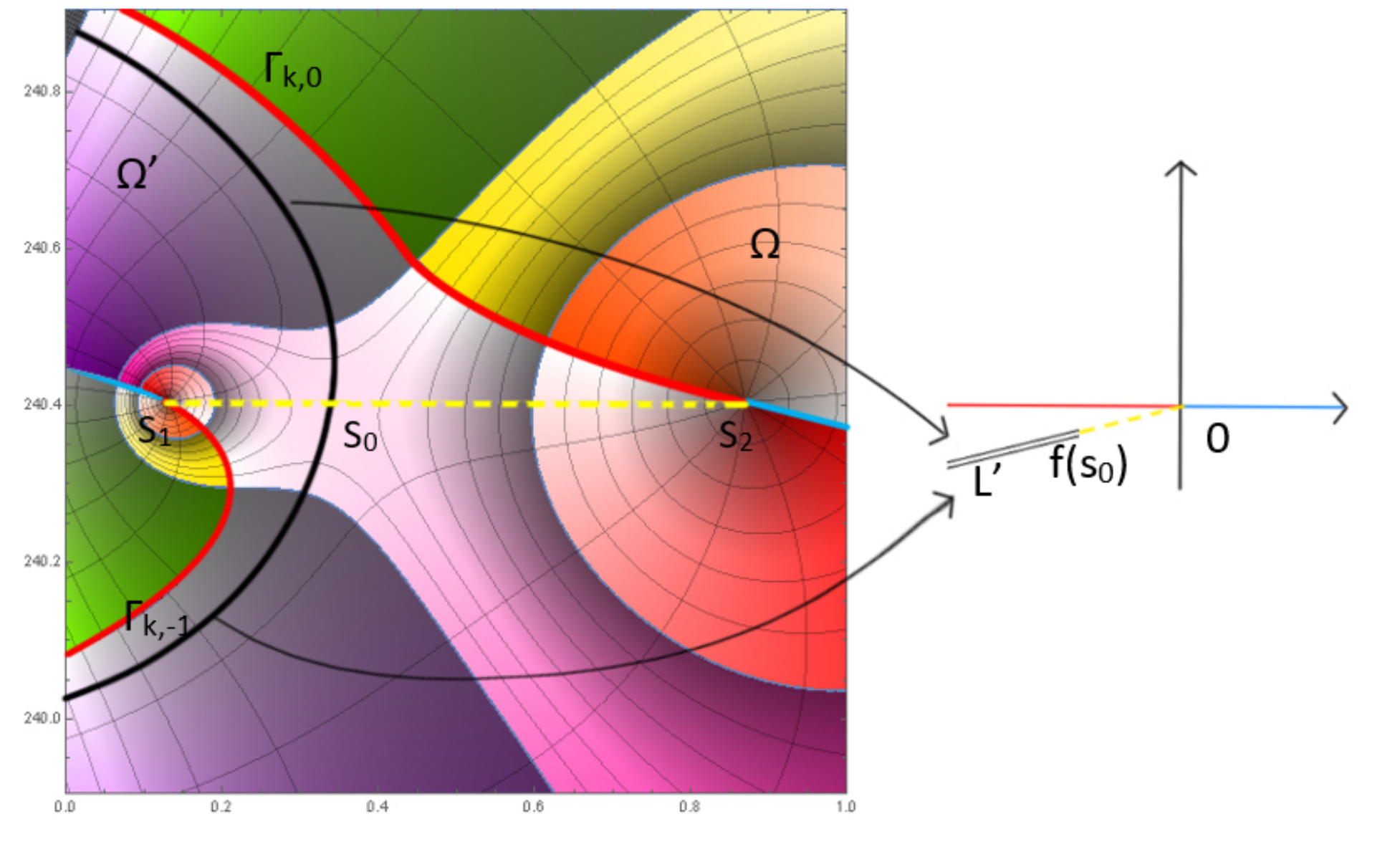}
  \caption{Two fundamental domains and their images}
  \label{fig: f.d2.}
\end{figure}

\medskip\noindent

The function $\phi$ can be continued as an analytic involution of the union
$\Omega \cup \Omega^{\prime}$
the fundamental domains $\Omega$ and $\Omega^{\prime}$ of  $\zeta_{A,\Lambda }(s)$ containing the respective zeros. The boundaries of the domains $\Omega$ and $\Omega^{\prime}$ have a common component $L$ and
the union  $\Omega \cup \Omega^{\prime}\cup L$ is a  simply connected domain $H$. The function $\phi$ can be continued to an analytic involution of $H$ having $s_{0}$ as a fixed point.
We have $\phi(\phi(s))=s$ in $H$, in particular $\phi(\sigma+it)=1-\sigma+it$ and $\phi(1-\sigma+it)=\sigma+it$.
 Moreover, $\zeta_{A,\Lambda }(\phi(s))=\zeta_{A,\Lambda }(s)$ in $H$.
 
\bigskip 

Let us define the function $\Phi$ by
\bigskip

\qquad $(7) \qquad \Phi(s)= \zeta_{A,\Lambda }(s)/\zeta_{A,\Lambda }(\phi(s))$

\bigskip

Since the numerator and the denominator of $\Phi$ are analytic functions in $H$ and the denominator cancels only at  $\sigma+it$ and $1-\sigma+it$, the function $\Phi$ is analytic in $H$ except at these two points.
Since $\zeta_{A,\Lambda }(s)=\zeta_{A,\Lambda }(\phi(s))$, we have that $\Phi(s)=1$ for $s$ not equal to one of these points. Yet they are  removable singularities,
and we can set $\Phi(s)=1$ in $H$. By the formula (5) we have 

\bigskip

\qquad $(8) \qquad \Phi(s)=\Pi_{p=1}^{\infty}(1-a_{p}e^{-\lambda _{p}\phi(s)})/(1-a_{p}e^{-\lambda _{p}s})$

\bigskip

In particular, $\Phi(s)=1$ in $H$ implies $\Phi(1/2 +it)=1$, hence $arg\Phi(1/2+it)=\Sigma_{p=1}^{\infty} arg(e^{i\lambda_{p}t}-a_{p}e^{-\lambda_{p}\delta})/(e^{i\lambda_{p}t}-a_{p}e^{-\lambda_{p}/2})=2k\pi$ for some integer $k$, where $\delta=\phi(1/2)$.

\bigskip

 The terms of this series represent the angles under which the segment between $a_{p}e^{-\lambda_{p}/2}$ and $a_{p}e^{-\lambda_{p}\delta}$ is seen from the point $e^{\lambda_{p}ti}$ on the unit circle.
 If a Ramanujan type condition is fulfilled, namely $A$ and $\Lambda$ are such that for every 
 $\delta>0$ we have $\lim_{n->\infty}|a_{n}|e^{-\lambda_{n}\delta}=0$, then the respective angles tend to zero as $p -> \infty$. This appears to be a necessary condition for the convergence of the series. However, the condition is implicitly
 satisfied  since we know that $\Phi(s)$ is well defined in
 the union of  $\Omega \cup \Omega^{\prime}$.   
  The series is a continuous function of $\delta$, yet it can only take integer multiple values of $2\pi$, which is absurd. Therefore  $\zeta_{A,\Lambda}(s)$ cannot have any non trivial zero $\sigma+it$ with $\sigma$ strictly greater than  $1/2$, which completely proves the theorem.\end{proof}

\bigskip

\textbf{Remark:} In all of our publications we understood by trivial zeros of an L-function those zeros which can be trivially computed. In this respect, the non trivial zeros of the alternating Zeta function $\zeta_{a}(s)=(1-2^{1-s})\zeta(s)$ are the same as those of $\zeta(s)$ and $\zeta_{a}(s)$ satisfies the conditions of Theorem 8, therefore its non trivial zeros are located on the critical line. Thus, RH is also true for $\zeta(s)$. Similarly, the non trivial zeros of a Dirichlet L-function $L(q,1,s)=(1-q^{1-s})\zeta(s)$  defined by the principal character modulo $q$ are the non trivial zeros of the Riemann Zeta function. Also, the  non trivial zeros of a Dirichlet L-function induced by an imprimitive character are the non trivial zeros of the function defined by the associated primitive character. Consequently, the RH for any Dirichlet L-function is fulfilled. With this understanding of the concept of non trivial zeros, Theorem 8 represents the proof of GRH for a wide class of functions.

  \bigskip  

\textbf{Acknowledgement}: The author is very much thankful to Florin Alan Muscutar for his contribution with computer generated graphics.

\bigskip

\textbf{\textbf{References}}

\bigskip

[1] Cahen, E. 1894. Sur la fonction $\zeta (s)$ de Riemann et sur des
fonctions analogues, Ann. Ecole Norm.,$3^{e}$ serie, t. 11 

\bigskip

[2] Cahen, E. 1918. Sur les series de Dirichlet, Compte rendue, t. 166

 \bigskip
 
[3] Hadamard, J. 1896. Sur la distribution des zeros de la fonction $%
\zeta(s)$ de Riemann et ses consequences arithmetiques, Bull. Soc. math., t. 24

\bigskip

[4] Hadamard, J.1908. Sur les series de Dirichlet, Rendiconti di Palermo, t. 25 

 \bigskip

[5] Landau, E. 1907. Ueber die Multiplication Dirichletscher Reihen,
rendiconti di Palermo, t. 24 

 \bigskip
 
[6] Landau, E. 1909. Ueber das Konvergenzproblem der Dirichletschen Reihen, Rendiconti di Palermo, t. 28

 \bigskip
 
[7] Landau, E. 1917.  Neuer Beweis eines Haptsatzes aus der Theorie der Dirichletschen Reihen, Leipziger Ber.,t. 69

 \bigskip
 
[8] Landau, E. 1921. Ueber die gleichmassige Konvergenz Dirichletscher Reihen,
Math. Zeitschrift, t. 11 

\bigskip
   
[9] Bohr, H. 1913(a). Ueber die gleichmassige Konvergenz Dirichletscher Reihen ,
J. fuer Math, t. 143

\bigskip

[10] Bohr, H. 1913(b). Einige Bemerkungen ueber das konvergenzproblem
Dirichletscher Reihen, Rendicnti di Palermo, t. 37

\bigskip

[11] Bohr, H. 1913(c). Darstellung der gleichmassigen Konvergenzabszisse einer Dirichletschen Reihe als Funktionen der Koeffizienten der Reihe, Archiv. der Math. und Phys., t. 21

\bigskip

[12]  Bohr, H. 1913(d). Zur Theorie der allgemeinen Dirichletschen Reihen, Math. Annalen, t. 79

\bigskip

[13]  Hardi, G.-H. 1911. The multiplication of Dirichlet's series, London math.
Soc., t. 10

\bigskip

[14]  Hardy, G. H. and Riesz, M. 1915. The General Theory of Dirichlet's Series,
Cambridge University Press

\bigskip

[15]  Kojima, T. 1914. On the convergence abscissa of general Dirichlet's
series, Tohoku J., t. 6

\bigskip

[16]  Kojima, T. 1916.  Note on the convergence abscissa of Dirichlet's series, Tohoku J., t. 9 

\bigskip

[17]  Kuniyeda, M. 1916. Uniform convergence abscissa of Dirichlet's series, Quat.J.,t. 47

\bigskip

[18] Valiron, G. 1926. Theorie generale des series de Dirichlet, Memorial des
sciences mathematiques, fascicuule 17, 1-56 

\bigskip

[19] Jiarong, Y., 1963, Borel's line of entire functions represented by Laplace-Stieltjes transformations, Acta Math. Sinica, 13, 471-484

\bigskip

[20] Kong, Y. Y. and Daochun, S., 2008. On type-function and growth of Laplace-Stieltjes transformations convergent in the right half plane, 2008. Adv. math, 37, 197-205 

 \bigskip
 
[21] Luo, X. and Kong, Y. Y.,2012. On the order and type of Laplace-Stieltjes transforms of slow growth, Acta Math. Sinica, 32A, 601-607
 
\bigskip

[22] Kong, Y. Y, and Yang, Y., 2014. On the growth properties of the Laplace-Stieltjes transform,  Complex Var. Elliptic Equ. 59(4), 553-563

 \bigskip
 
[23] Singhal, C. and Srivastava, G. S., 2015.  On the approximation of an analytic function represented by Laplace-Stieltjes transformation, Anal. Theory Appl. 31, 407-420

 \bigskip
 
[24] Srivastava, G. S. and Singhal, C., 2015.  On the generalized order and the generalized type of Laplace-Stieltjes transformation convergent in the right half plane, Global journal of Pure and Applied Mathematics, 11, 469-477 

\bigskip

[25] Kamthan, P. K. and Shing Gautam, S. K. 1975. Bases in a certain space of functions analytic in the half plane, Indian J. Pure Appl. Math., 6, 1066-1075

\bigskip

[26] Khoi, L. H., 1998, Holomorphic Dirichlet series in the half plane

 \bigskip
 
[27] Defant, A., Garcia, D, Maestre, M., Perez-Garcia, D., 2008, Bohr's strip for vector valued Dirichlet series, Math. Ann. 342, no.3, 533-555

 \bigskip

[28] Bonet, J. 2015, Abscissas of weak convergence of vector valued Dirichlet series, arXiv: 1502.00418v1

\bigskip

[29] Srivastava, B. L. 1983. A Study of Spaces of Certain Classes of Vector Valued Dirichlet Series, Theis, I. I. T. Kanpur

\bigskip

[30] Srivastava, G. S. and Sharma, A. 2011, Bases in the space of vector valued analytic Dirichlet series, International J. Pure Appl. Math., 7, 993-1000

\bigskip

[31] Srivastava, G. S. and Sharma, A. 2012. Spaces of entire functions represented by vector valued Dirichlet series of slow growth, Acta Universitatis Sapientiae, Mathematica, 4, 2, 154-167 

\bigskip
 
[32] Ghisa, D. 2013. Fundamental Domains and the Riemann Hypothesis, Lambert
Academic Publishing

 \bigskip
 
[33] Ghisa, D. 2014. On the Generalized Riemann Hypothesis, Complex Analysis and
Potential Theory with Applications, Cambridge Scientific Publishers, 77-94

\bigskip

[34] Ghisa, D. 2016a. On the Generalized Riemann Hypothesis II,IJSIMR,4, 46-55

\bigskip

[35] Ghisa, D. 2016b. Fundamental Domains and Analytic Continuation of General
Dirichlet Series, BJMCS

\bigskip

[36] Andreian-Cazacu, C. and Ghisa, D. 2009. Global Mapping Properties of Analytic Functions,
Proceedings of 7-th ISAAC Congress, London, U. K., 3-12

\bigskip
 
[37] Andreian-Cazacu, C. and Ghisa, D. 2011. Fundamental Domains of Gamma and Zeta Functions, International Journal of Mathematics and Mathematical Sciences, http://dx.doi.org/10.1155/2011/985323, 21 pages
 \bigskip

[38] Speiser, A. 1934. Geometrisches zur Riemannschen Zetafunction, Math. Ann. 110, 514-521

\bigskip

[39] Arias-de-Reina, J. 2003. X-Ray of Riemann's Zeta-Function, 
ArXiv:math/0309433, 42 pages

\bigskip

[40] Wegert, E. 2012. Visual Complex Functions, Birkhauser

 \bigskip
 
[41] Wegert, E. 2016. Visual Exploration of Complex Functions, Springer Proceedings in Mathematics and Statistics, Volume 177

\bigskip

[42] Barza, I., Ghisa, D. and Muscutar, F. A. 2014.  On the Location of the Zeros of the Derivatives of Dirichlet L-functions, Annals of the University  of Bucharest (mathematics series) 5(LXIII), 21-31

\bigskip
 
[43] Cao-Huu, T., Ghisa, D. and Muscutar, F. A. 2016. Multiple Solutions of Riermann Type of Functional Equations, British Journal of Mathematics and Computer Science 17(3) 1-12

\bigskip

\end{document}